\documentclass[final]{siamltex}

\usepackage{mathrsfs}
\usepackage{color}
\usepackage{graphicx}
\usepackage{amsmath}
\usepackage{amsfonts}
\usepackage{amssymb}
\usepackage{textcomp}
\usepackage{latexsym}
\usepackage{epstopdf}

\usepackage{threeparttable}




\newtheorem{prop}[theorem]{Proposition}


\newtheorem{remark}[theorem]{Remark}

\def\cal{\mathcal}

\def\E{{\mathcal E}}

\def\T{{\mathcal T}}

\def\N{{\mathcal N}}

\newcommand\ddelta\bigtriangledown
\newcommand\ld\lambda
\newcommand\Ld\Lambda

\def \bZ {\Bbb Z}
\begin{document}
\title{Is $2k$-Conjecture valid for finite volume methods?}
\date{}
\author{Waixiang Cao
\thanks{ Beijing Computational Science Research Center, Beijing, 100084, China.
} \and Zhimin Zhang
\thanks{Beijing Computational Science Research Center, Beijing, 100084, China. Department of Mathematics, Wayne State University,
  Detroit, MI 48202, USA. This author was supported in part by the US National Science
Foundation through grant DMS-1115530.}
\and Qingsong Zou
\thanks{ College of Mathematics and Computational Science and Guangdong Province Key Laboratory of Computational Science, Sun Yat-sen
          University, Guangzhou, 510275, P. R. China. This author is supported in part by
          the National Natural Science Foundation of China under the grant 11171359 and  in part by the Fundamental Research Funds for the Central
         Universities of China.}
}
\maketitle
\begin{abstract}
  This paper is concerned with superconvergence properties of a class of finite volume methods of arbitrary order over rectangular meshes. Our main result is to prove {\it 2k-conjecture}:
  at each vertex of the underlying rectangular mesh, the bi-$k$ degree finite volume solution approximates the exact solution with an order
  $ O(h^{2k})$, where $h$ is the mesh size. As
byproducts,  superconvergence properties for finite volume
discretization errors at Lobatto and Gauss points are also obtained.
All theoretical findings are confirmed by numerical experiments.
\end{abstract}

\section{Introduction}

 As a popular numerical method for  partial differential equations (PDEs), the finite volume method (FVM)
 has a wide range of  applications and attracts intensive theoretical studies, see, e.g., \cite{Bank.R;Rose.D1987,Barth.T;Ohlberger2004, Cai.Z1991,Cai.Z_Park.M2003,Chen.L.Siam,ChenWuXu2011,Emonot1992, Ewing.R;Lin.T;Lin.Y2002,EymardGallouetHerbin2000, Li.R2000,Ollivier-Gooch;M.Altena2002,Plexousakis_2004,Suli1991,Xu.J.Zou.Q2009,zou2009a}
  for an incomplete list of publications.
 However, most theoretical studies in the literature have been focused on linear or quadratic schemes. Recently, arbitrary order FV schemes
 have been constructed and analyzed for elliptic problems in \cite{Cao;Zhang;Zou2012} and \cite{zhang;zou2012}. The
 basic idea of in \cite{Cao;Zhang;Zou2012,zhang;zou2012} to design a FV scheme of any order $k$ is to choose standard finite element space as the
 trial space and construct control volumes with Gauss points in the primal partition. These FV schemes
   are shown to be convergent with optimal rates under  both energy and $L^2$ norms.

    In 1973 Douglas-Dupont proved that the $k$th order $C^0$ finite element method (FEM) to the two-point boundary value problem converges with rate $h^{2k}$ at nodal points. Since then, it has been conjectured (based on many numerical evidences) that the same is true for bi-$k$ finite element approximation under rectangular meshes for the Poisson equation. This conjecture was settled (see \cite{Chen.C.M2012}) recently after almost 40 years. Our earlier study reveals that a class of finite volume methods of arbitrary degree have similar (and even better in some special cases) superconvergence property as counterpart finite element methods in the one dimensional setting \cite{Cao;Zhang;Zou2012,Cao;Zhang;ZouSPR2012}. It is natural to ask whether the {\it 2k-conjecture} is valid for finite volume methods? In this work, we will provide a confirmatory answer to this question.
To be more precise, we shall investigate superconvergence properties
of any order FV schemes studied in \cite{zhang;zou2012}. In
particular, we show that the underlying FVM has all superconvergence
properties of the counterpart FEM.

  We begin with a model problem:
\begin{eqnarray}\label{Poisson}
     - \triangle u =f \ \rm{in}\   \ \Omega,\
     {\rm and}\  u=0,  \ \rm{on}\  \  \partial\Omega,
\end{eqnarray}
  where $\Omega=[a,b]\times[c,d]$  and  $f$ is a real-valued function defined on $\Omega$.

Techniques used in \cite{Cao;Zhang;Zou2012,Cao;Zhang;ZouSPR2012} are very difficult to be applied to FV schemes in the
 two dimensional setting.  Inspired by a recent work \cite{Chen.C.M2012} for the finite element method,
 our approach here is to construct a suitable function
 to correct the error between the exact solution $u$ and its interpolation $u_I$.
Due to different nature of the finite volume method, the construction here
is different from that of for the FEM, some novel design has to be make to serve our purpose.
 In particular, we construct our correction function by designing some special operators,
 instead of a complicated iterative procedure used in the FEM case (see Section 3).
 In addition, using a special mapping from the trial space to test space (\cite{zhang;zou2012}), the FV bilinear form can be regarded as a Gauss quadrature of its corresponding FE bilinear form. Then by taking special cares to the residual term of the Gauss quadrature, we show that our correction function also has desired properties.
Once the correction function is constructed, superconvergence properties at some special points can be obtained with standard arguments.
Our main results can be summarized as the following.

 We first establish superconvergence at nodes :  the bi-$k$ degree FV solution $u_h$ superconverges to $u$ with order $2k$ at any nodal point $P$, i.e.,
  \begin{equation}\label{2kconj}
      (u-u_h)(P)=O(h^{2k}), \qquad (\text{comparing with optimal global rate} \; O(h^{k+1}))
  \end{equation}
  which is termed by Zhou and Lin (\cite{Zhou.Lin}) as {\it 2k-conjecture} in the finite element regime,
 see also, e.g., \cite{Bramble.Schatz.math.com, V.Thomee.math.comp}, for the literature along this line.

 Our superconvergence results also include
 \begin{equation}\label{labottopoint}
 (u-u_h)(L) = O(h^{k+2}), \qquad (\text{comparing with} \; \|u-u_h\|_0 = O(h^{k+1}))
 \end{equation}
 where $L$ is an interior Lobatto point; and
 \begin{equation}\label{gausspoint}
 \nabla (u-u_h)(G) = O(h^{k+1}), \qquad (\text{comparing with} \; \|u-u_h\|_1 = O(h^k))
 \end{equation}
 where $G$ is a Gauss point. As the reader may recall, these rates are the same as the counterpart FEM.

   The rest of the paper is organized as follows. In Section 2,  we present our FV scheme for \eqref{Poisson} and discuss the
   relationship between FV and FE bilinear forms.
    Section 3 is the most technical part, where we construct a correction function and study its properties.
   In Section 4, we prove our main results (\ref{2kconj}) -- (\ref{gausspoint}). Finally, we provide some carefully designed
   numerical examples to support our theoretical findings in Section 5.

   Throughout this paper,  we adopt standard notations for Sobolev spaces such as $W^{m,p}(D)$ on sub-domain $D\subset\Omega$ equipped with
    the norm $\|\cdot\|_{m,p,D}$ and semi-norm $|\cdot|_{m,p,D}$. When $D=\Omega$, we omit the index $D$; and if $p=2$, we set
   $W^{m,p}(D)=H^m(D)$,
   $\|\cdot\|_{m,p,D}=\|\cdot\|_{m,D}$, and $|\cdot|_{m,p,D}=|\cdot|_{m,D}$. Notation``$A\lesssim B$" implies that $A$ can be
  bounded by $B$ multiplied by a constant independent of the mesh size $h$.
  ``$A\sim B$" stands for $``A\lesssim B"$ and $``B\lesssim A"$.

 To end this introduction, we would like to emphasize that this work is a theoretical investigation. Our intention here is not to provide a practical
 method or anything like, rather, we settle a conjecture in convergence rate to the best possible case under very limited special situation.

 Comparing with rich literature on superconvergence of the FEM (see, e.g., \cite{Babuska1996,Bramble.Schatz.math.com,Chen.C.M2001,Chen.C.M_Huang.Y.Q2001,Neittaanmaki1987,V.Thomee.math.comp,Schatz.Wahlbin1996,Wahlbin1995,Zhu.QD;LinQ1989}),
 the superconvergence study for the FVM is still in its infancy, especially for high order schemes.

\section{ Finite volume schemes of arbitrary order}
  \setcounter{equation}{0}
  In this section, we first  recall finite volume schemes
   introduced in \cite{zhang;zou2012}, then we  discuss briefly
   the relationship between the FV  and its corresponding FE bilinear forms.

 Let $\cal T_h$ be a rectangular partition of $\Omega$, where $h$
  is the maximum length of all edges.
  For  any $\tau\in\cal T_h$, we denote by $h^x_\tau,  h^y_\tau$
  the lengths of $x$- and $y$- directional edges of $\tau$, respectively.
  We assume that the mesh $\cal T_h$ is {\it{quasi-uniform}} in the sense that there exist
  constants $c_1,c_2>0$ such that
\[
    h\le c_1h^x_\tau ,\ \ h\le c_2h_\tau^y,\ \ \forall \tau\in\cal T_h.
\]
We denote by $\E_h$ and $\N_h$ the set of edges and vertices of
$\T_h$, respectively.

  We construct control volumes  using Gauss points described below.
   Define reference element $\hat{\tau}=[-1,1]\times[-1,1]$, and $\bZ_{r}=\{1,2,\ldots, r\}, \bZ^0_r=\{0,1,\ldots, r\}$
  for all positive integer $r$.
  Let $G_j, j\in\bZ_k$ be Gauss points of degree $k$ ( zeros of the Legendre polynomial $P_k$) in $[-1,1]$. Then
  $g^{\hat{\tau}}_{i,j}=(G_i,G_j), i,j\in\bZ_k$ constitutes $k^2$ Gauss points in $\hat{\tau}$.
  Given $\tau\in\cal T_h$, let $F_{\tau}$ be the affine mapping from $\hat\tau$ to $\tau$.
 Then Gauss points in $\tau$ are :
 \[
     \cal G_{\tau}=\{ g_{i,j}^{\tau}: g_{i,j}^{\tau}=F_{\tau}(g_{i,j}^{\hat{\tau}}),\ i,j\in\bZ_k\}.
 \]
  Similarly, let
 $L_i, i\in\bZ_k^0$ be Lobatto points of degree $k+1$  on the interval $[-1,1]$, i.e.,
 $L_0=-1, L_{k}=1$ and $L_i,i\in\bZ_{k-1}$ are zeros of  $P_k'$. Then
\[
  \cal N_{\tau}=\{l^{\tau}_{i,j}: l^{\tau}_{i,j} =F_{\tau}(L_i,L_j),\ i,j\in\bZ_{k}^0\}
\]
  constitutes $(k+1)^2$  Lobatto points on $\tau$. We denote by
\[
  \N^{g}=\bigcup_{\tau\in\cal T_h}\cal G_{\tau},\ \  \ \ \N^{l}=\bigcup_{\tau\in\cal T_h}\cal N_{\tau}
\]
  the set of Gauss and Lobatto points  on the whole domain, respectively; and
  $\N^{l}_0$ the set of interior Lobatto points  by excluding Lobatto points on the boundary $\partial\Omega$.
  For any $ P\in \cal N^l_0$,
   the control volume surrounding $P$ is the rectangle $K_{P}^*$  formed by four segments connecting
   the  four Gauss points in ${\cal N}^g$  closest to $P$. Then
\[
  \cal T^{*}_h=\bigcup_{P\in\cal N^l}K_{P}^*
\]
   constitutes a dual partition of $\cal T_h$.


 Next, we denote $\mathbb{P}_k$ as the space of polynomials with degree no more than $k$;
and $\psi_{K_P^*}$, the characteristic function of $K_P^*$. Then
the trial and test spaces are defined as
\begin{eqnarray*}
  U_{h}=\{v\in C(\Omega): v|_{\tau}\in \mathbb{P}_k(x)\times \mathbb{P}_k(y), \tau\in\cal T_h, v|_{\partial
  \Omega}=0\}
\end{eqnarray*}
and
\[
   V_h={\rm{Span}}\{\psi_{K_P^*}: P\in\N_0^l\},
\]
respectively. We  see that $U_h$ is the bi-$k$ degree finite element
space,
  and $V_{h}$ is the piecewise constants space with respect to
  the partition $\cal T_h^*$. They both vanish on the boundary of $\Omega$.

   The finite volume method for solving \eqref{Poisson} is to find $u_h\in U_h$ satisfying the following
   local conservative property
\[
   -\int_{\partial\tau^*}\frac{\partial u_h}{\partial \mathbf{n}}ds=\int_{\tau^*} f dxdy,\ \ \forall \tau^*\in\cal T_h^*,
\]
  or equivalently,
\begin{equation}\label{FVM}
   a_{h}(u_{h}, v_{h})=(f, v_{h}),\ \ \forall v_{h}\in V_{h},
\end{equation}
   where the bilinear form is defined for all $w\in H^{1}_0(\Omega), v_{h}\in V_{h}$ by
\begin{eqnarray} \label{bilinear1}
   a_{h}(w,  v_{h})=-\sum_{E\in\mathcal{E}_{\cal T^*_{h}}}[v_{h}]_E\int_{E}\frac{\partial w}{\partial \mathbf{n}}ds.
\end{eqnarray}
   Here $\mathcal{E}_{\cal T_h^*}$ is the set of interior edges of the
  dual partition $\cal T_h^*$, $[v_h]_E=v_h|_{\tau_2}-v_h|_{\tau_1}$ denotes the jump of
  $v_{h}$ across the common edge $E=\tau_1\cap\tau_2$ of two
  rectangles $\tau_1,\tau_2\in\cal T_h^*$, and $\mathbf{n}$ denotes the normal
  vector on $E$ pointing from $\tau_1$ to $\tau_2$.

   The inf-sup condition and continuity of the bilinear form $a_{h}(\cdot,\cdot)$
   have been established in \cite{zhang;zou2012}.
   Moreover,  we have the following convergence and superconvergence properties.
\begin{lemma} {\rm{(cf.\cite{zhang;zou2012})}}
    Let $u\in H^1_0(\Omega)\cap H^{k+2}(\Omega)$ be the solution of \eqref{Poisson}, and $u_{h}$, the solution of \eqref{FVM}.
    Then,
\begin{equation}\label{super_close}
    |u-u_h|_1\lesssim h^{k}|u|_{k+1},\ \ \ \  |u_h-\tilde{u}_I|_1\lesssim h^{k+1}|u|_{k+2},
\end{equation}
  where $\tilde{u}_I\in U_h$ is the function interpolating $u$ at Lobatto points.
\end{lemma}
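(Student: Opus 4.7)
The plan is to prove the two estimates separately. The first is a standard optimal-order error bound; the second is a supercloseness result that requires exploiting the Gauss-quadrature interpretation of $a_h$ alluded to in the introduction.

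For the optimal $H^1$ estimate $|u-u_h|_1\lesssim h^k|u|_{k+1}$, the first step is to verify the Galerkin orthogonality $a_h(u-u_h,v_h)=0$ for all $v_h\in V_h$. This is obtained by testing $-\triangle u=f$ against the characteristic function $\psi_{\tau^*}$ of each dual cell $\tau^*\in\cal T_h^*$, applying the divergence theorem, and comparing with the definition \eqref{bilinear1} of $a_h$. Combined with the inf-sup and continuity properties of $a_h(\cdot,\cdot)$ on $U_h\times V_h$ already established in \cite{zhang;zou2012}, this yields a quasi-best-approximation bound of Cea type, $|u-u_h|_1\lesssim \inf_{w_h\in U_h}|u-w_h|_1$. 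Taking $w_h=\tilde{u}_I$ and invoking the classical interpolation estimate $|u-\tilde{u}_I|_1\lesssim h^k|u|_{k+1}$ for the bi-$k$ Lobatto interpolant closes this step.

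For the supercloseness bound $|u_h-\tilde{u}_I|_1\lesssim h^{k+1}|u|_{k+2}$, set $w_h:=u_h-\tilde{u}_I\in U_h$. The inf-sup condition gives $|w_h|_1\lesssim \sup_{v_h\in V_h} a_h(w_h,v_h)/\|v_h\|$, and Galerkin orthogonality reduces the numerator to $a_h(\tilde{u}_I-u,v_h)$. The essential tool is an isomorphism $\Pi_h^*:U_h\to V_h$ of \cite{zhang;zou2012} under which $a_h(\phi,\Pi_h^*\psi)=a(\phi,\psi)+R_h(\phi,\psi)$, where $a(\phi,\psi)=\int_\O\nabla\phi\cdot\nabla\psi\,dx$ is the standard FE form and $R_h$ is the Gauss-quadrature residual. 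Writing each $v_h$ as $\Pi_h^*\psi$ with $\|v_h\|\sim|\psi|_1$, the analysis splits into bounding $a(\tilde{u}_I-u,\psi)$ and $R_h(\tilde{u}_I-u,\psi)$. The first piece is the classical FE supercloseness quantity for the Lobatto interpolant: on each rectangle the $x$- and $y$-derivatives of $\tilde{u}_I-u$ are orthogonal to lower-degree polynomials in the corresponding variable, a tensor-product consequence of one-dimensional Lobatto orthogonality, yielding the $h^{k+1}|u|_{k+2}|\psi|_1$ bound.

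The main obstacle is controlling the quadrature residual $R_h(\tilde{u}_I-u,\psi)$. One must exploit both the polynomial degree of exactness of the Gauss rule (exact for tensor-product polynomials of degree $\le 2k-1$ per variable) and the Lobatto-point vanishing structure of $\tilde{u}_I-u$ simultaneously; a naive estimate based on either one alone only gives $h^k$. The intended route is an elementwise Peano-kernel expansion of $R_h$ applied to $\nabla(\tilde{u}_I-u)\cdot\nabla\psi$, combined with integration by parts that transfers derivatives onto smooth factors and uses the orthogonality of $\tilde{u}_I-u$ to polynomials of appropriate degree; the low-order contributions cancel, leaving a remainder bounded by $h^{k+1}|u|_{k+2}|\psi|_1$. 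Once both pieces are in hand, the supercloseness bound follows by taking the supremum.
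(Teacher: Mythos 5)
First, a point of comparison: the paper does not prove this lemma at all --- it is imported from \cite{zhang;zou2012} (hence the ``cf.''), so there is no in-paper argument to measure your proposal against. Your outline does follow the route that such a proof must take (local conservation giving $a_h(u-u_h,v_h)=0$ for all $v_h\in V_h$, the inf-sup condition, interpolant supercloseness, and the Gauss-quadrature interpretation of $a_h(\cdot,\Pi\cdot)$), and your treatment of the second estimate is structurally consistent with how the present paper handles the closely related quantity $a_h(u-u_I,\Pi v)$ in Section~3.

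There is, however, one step that does not hold as written: the Cea-type reduction in your first paragraph. Continuity of $a_h(\cdot,\cdot)$ on $U_h\times V_h$ says nothing about $a_h(u-w_h,v_h)$, since $u-w_h\notin U_h$; and for a general $w\in H^1_0(\Omega)$ the form \eqref{bilinear1} involves edge integrals $\int_E\frac{\partial w}{\partial \mathbf{n}}\,ds$ over dual edges, which are not controlled by $|w|_1$. So the quasi-best-approximation bound $|u-u_h|_1\lesssim\inf_{w_h\in U_h}|u-w_h|_1$ does not follow from the ingredients you invoke, and this is not how optimal rates are obtained for FVM. The correct reduction is: inf-sup plus orthogonality give $|u_h-\tilde{u}_I|_1\lesssim\sup_{v_h}a_h(u-\tilde{u}_I,v_h)/\|v_h\|$, and the consistency term $a_h(u-\tilde{u}_I,v_h)$ is then estimated \emph{directly}, using the extra regularity $u\in H^{k+2}$ (which is exactly why the lemma assumes more than $H^{k+1}$) together with the pointwise representation \eqref{bilinear2} or elementwise trace inequalities; a single such estimate delivers both bounds in \eqref{super_close}, at orders $h^{k}$ and $h^{k+1}$ respectively. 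A second, smaller imprecision: the quadrature residual to be controlled is that of $\bigl(\partial_x^{-1}\partial_y w+\partial_y^{-1}\partial_x w\bigr)\partial^2_{x,y}v$ sampled at the Gauss points, as in \eqref{bilinear2}, not of $\nabla w\cdot\nabla\psi$; the two integrands agree only after Green's formula, and the Gauss rule does not commute with that integration by parts. Your Peano-kernel/orthogonality strategy is the right instrument, but it must be applied to the correct integrand, in the manner of the quadrature-error estimates in the proof of Lemma~\ref{lemma1}.
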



We next discuss the relationship between $a_h(\cdot,\cdot)$ and the
 FE bilinear form $a_e(\cdot,\cdot)$, which is
defined for all $v,w\in H^1(\Omega)$ by
\[
a_e(v,w)=\int_\Omega \ddelta v\cdot \ddelta w.
\]
We begin with some necessary notations. Let $A_j, j\in\bZ_k$ denote
the weights of the Gauss quadrature $ Q_k(F)=\sum_{j=1}^k A_j F(G_j)
$ for computing the integral $ I(F)=\int_{-1}^1 F(x) dx$. For all
$\tau\in\cal T_h$ and $v_1, v_2\in L^2(\tau)$,  we define
\[
   \langle
   v_1,v_2\rangle_{\tau}=\sum_{i,j=1}^{k}A_{\tau,i}^xA_{\tau,j}^y(v_1v_2)(g_{i,j}^{\tau}),
\]
where
\[
  A^x_{\tau,j}=\frac{1}{2}h^x_\tau A_j,\  A^y_{\tau,j}=\frac {1}{2}h^y_\tau A_j,\  j\in\bZ_k
\]
are Gauss weights associated with $\tau$.  Then we can define a
discrete inner product on $\Omega$ :
\[
  \langle v_1,v_2\rangle=\sum_{\tau\in
  \cal
  T_h}\sum_{i,j=1}^{k}A_{\tau,i}^xA_{\tau,j}^y v_1(g_{i,j}^{\tau})v_2(g_{i,j}^{\tau}).
\]
 Writing $\partial_x=\frac{\partial}{\partial x},\partial_y=\frac{\partial}{\partial y}$ for simplicity,
 we denote, for all $ w\in H_0^1(\Omega)$,
\[
\partial^{-1}_x w(x,y)=\int_a^x w(x',y) dx',\ \   \partial^{-1}_y w(x,y)=\int_c^y w(x,y') dy'.
\]
  A function $v_h\in V_h$ can be represented as
\[
   v_h=\sum_{P\in\N^l_0}(v_h)_{P}\psi_{K_P^*}=\sum_{P\in\N^l}(v_h)_{P}\psi_{K_P^*},
\]
  where $(v_h)_{P}$  is a constant on the control volume ${K_P^*}$ for $P\in {\cal N}^l$. Here we use the fact
   $(v_h)_{P}=0, P\in\partial\Omega$.

  Furthermore, we denote the (double layer) jump of $v_h$ at the Gauss point $g_{i,j}^{\tau}, \forall\tau\in\T_h, i,j\in\bZ_k$
  as
\[
  \lfloor v_h \rfloor_{g^\tau_{i,j}}=(v_h)_{l^\tau_{i,j}}+(v_h)_{l^\tau_{i-1,j-1}}-(v_h)_{l^\tau_{i-1,j}}-(v_h)_{l^\tau_{i,j-1}}.
\]
With above notations, it is  straightforward to deduce from
\eqref{bilinear1} that
\begin{eqnarray} \label{bilinear2}
a_h(w,v_h)=-\sum_{\tau\in\T_h}\sum_{i,j=1}^k
\left(\partial^{-1}_x\partial_y w+\partial^{-1}_y\partial_x
w\right)(g_{i,j}^\tau)\lfloor v_h \rfloor_{g^\tau_{i,j}}.
\end{eqnarray}

In \cite{zhang;zou2012},  a linear mapping $\Pi : U_h\rightarrow
V_h$
\begin{equation}\label{piv}
  \Pi v=v_h= :\sum_{P\in \N^{l}_0} (v_h)_{P}\psi_{K^*_P}\in {V}_h, \ \  v\in U_h,
\end{equation}
 is defined  by letting
\begin{equation}\label{mapcond}
\lfloor v_h \rfloor_{g^\tau_{i,j}}=A^x_{\tau,i}A^y_{\tau,j}
\partial^2_{xy}v(g^\tau_{i,j}),\ \  \forall g^\tau_{i,j}\in {\cal N}^ g.
\end{equation}
Note that although the number of constraints in \eqref{mapcond}
(which equals to the cardinality of $\N^{{g}}$) is different from
the dimensionality of the test space (which equals to the
cardinality of $\N^{{l}}_0$), it has been rigourously shown in
\cite{zhang;zou2012} that $\Pi$ is well-defined.

With this mapping, we have
\begin{eqnarray*}\label{bilinear}
  a_{h}(w, \Pi v)
  &=&-\langle\partial_x^{-1}\partial_y w, \partial^2_{x,y}
  v\rangle-\langle\partial_y^{-1}\partial_x w, \partial^2_{x,y}
  v\rangle.
\end{eqnarray*}
Since by Green's formula,
 \begin{eqnarray*}
   a_e(w,v)=-\int_{\Omega}\left(\partial_x^{-1}\partial_y w+\partial_y^{-1}\partial_x w\right) \partial^2_{x,y}
  vdxdy,
\end{eqnarray*}
 therefore, the finite volume bilinear form $a_{h}(\cdot,\Pi\cdot)$
  can be regarded as the  Gauss quadrature of the Galerkin bilinear form
  $a_{e}(\cdot,\cdot)$. Note that similar point of view appeared in the analysis of linear FV schemes in \cite{Ewing.R;Lin.T;Lin.Y2002}.

%





\section{Correction function}
\setcounter{equation}{0}

 Superconvergence analysis at a special point can usually be reduced to estimating
\[
a_h(u-u_I, \Pi v), \forall v\in U_h,
\]
where $u_I\in U_h$ is an interpolant of $u$  which will be defined
in \eqref{interpolation}. A straightforward analysis using the
continuity of $a_h(\cdot,\cdot)$ results in
\[
|a_h(u-u_I, \Pi v)|\lesssim h^k,
\]
due to the restriction of optimal error bound
\[
|u-u_I|_1 \lesssim h^k.
\]
Further analysis based on standard superconvengence argument may lead to
\[
|a_h(u-u_I, \Pi v)|\lesssim h^{k+1},
\]
an improvement by order one, but is still far from our need. To
obtain desired superconvergence results, more delicate analysis is
necessary. In this section, we shall construct a correction function
$w_h$ with  following  properties.

\begin{prop}\label{theorem_ver}
    Assume  that $u\in H^{\alpha+1}(\Omega), \alpha=k+2 (or \  2k)$.
    Then there exists a function $w_{h}\in U_{h}$ such that $w_{h}=0$ at all
    nodes and
\begin{equation}\label{ineq_1}
   \|w_{h}\|_{\infty}\lesssim h^{k+2}|{\rm{ln}}h|^{\frac 12}\|u\|_{\alpha+1}.
\end{equation}
  Furthermore,
\begin{equation}\label{correction_2}
   |a_{h}(u-u_I-w_{h}, \Pi v)|\lesssim h^{\alpha}\|u\|_{\alpha+1}\|v\|_1,\ \forall
   v\in U_{h}.
\end{equation}
\end{prop}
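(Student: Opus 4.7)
The plan is to exploit the identity \eqref{bilinear}, which recasts $a_{h}(\cdot,\Pi v)$ as the $k$-point tensor-product Gauss quadrature of the Galerkin bilinear form $a_{e}(\cdot,v)$. Setting $w = u-u_I-w_h$, estimate \eqref{correction_2} becomes
\[
\bigl|\langle \partial_x^{-1}\partial_y w,\partial_{xy}^2 v\rangle + \langle \partial_y^{-1}\partial_x w,\partial_{xy}^2 v\rangle\bigr| \lesssim h^{\alpha}\|u\|_{\alpha+1}\|v\|_1,
\]
so the task reduces to finding $w_h\in U_h$, vanishing at the mesh vertices, whose mixed antiderivatives at the Gauss points cancel the leading contributions of the corresponding expressions in $u-u_I$.

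First, on each element $\tau$ I would expand $u-u_I$ in the tensor-product Legendre basis. Because $u_I$ interpolates $u$ at the $(k+1)^2$ Lobatto points, the surviving components are precisely those of Legendre index at least $k+1$ in at least one variable. Applying $\partial_x^{-1}\partial_y$ or $\partial_y^{-1}\partial_x$ and then evaluating at the Gauss points (the zeros of $P_k$) annihilates most of these components through the orthogonality of Legendre polynomials and the identities $P_k(G_j)=0$. What remains is an explicit short list of terms, each a fixed coefficient times a specific mixed derivative of $u$. I would then define $w_h$ elementwise as a combination of bubble-type basis functions built from products of $(1-\xi^2)P_{j}'(\xi)$-style factors in each variable. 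The prefactors $(1-\xi^2)$ automatically force $w_h=0$ at every vertex, and the coefficients are tuned so that $\partial_x^{-1}\partial_y w_h$ and $\partial_y^{-1}\partial_x w_h$ reproduce exactly the leading surviving terms at every Gauss point of $\tau$.

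The main obstacle is the nonlocality of $\partial_x^{-1}$ and $\partial_y^{-1}$: the value of $\partial_x^{-1}\partial_y w$ at a Gauss point in $\tau$ depends on $w$ across the entire horizontal strip to the left of $\tau$. The local correction must therefore be designed so that its contributions telescope cleanly along each horizontal (resp.\ vertical) strip, leaving only controllable boundary terms from the strip's first element. This telescoping is what dictates the precise bubble basis used in $w_h$. The logarithmic factor $|\ln h|^{1/2}$ in \eqref{ineq_1} arises at this global-assembly stage: a discrete Green's-function / duality argument produces one power of $\log h$ in an $L^2$-type assembly, and the square root reflects the transition to a pointwise bound via an inverse inequality. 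Combining this global estimate with elementwise Bramble--Hilbert bounds yields \eqref{ineq_1}.

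Finally, the residual bound \eqref{correction_2} is verified by inserting the corrected function into the quadrature identity. For $\alpha=k+2$, exactness of the $k$-point Gauss rule on polynomials of degree up to $2k-1$, combined with the bubble correction, eliminates the $O(h^{k})$ and $O(h^{k+1})$ terms, leaving a residual of size $h^{k+2}\|u\|_{k+3}\|v\|_1$. For the sharper case $\alpha=2k$, which is the one required for the nodal $2k$-conjecture, I would further exploit that $\partial_{xy}^2 v$ is a polynomial of degree $k-1$ in each variable on each element and invoke orthogonality at the Gauss points a second time (using again $P_k(G_j)=0$) to squeeze the remainder down to $O(h^{2k})$, at the cost of the higher regularity $\|u\|_{2k+1}$.
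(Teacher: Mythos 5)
Your starting point --- viewing $a_h(\cdot,\Pi v)$ as the tensor-product Gauss quadrature of $a_e(\cdot,v)$ and reducing \eqref{correction_2} to controlling $\langle \partial_x^{-1}\partial_y w,\partial^2_{xy}v\rangle+\langle \partial_y^{-1}\partial_x w,\partial^2_{xy}v\rangle$ --- matches the paper. But the core of your plan, an \emph{explicit elementwise} correction built from bubble factors tuned so that its mixed antiderivatives ``reproduce exactly the leading surviving terms at every Gauss point,'' is precisely the step you do not carry out, and it is where the real difficulty lives. You correctly identify the obstacle (the nonlocality of $\partial_x^{-1}$ and $\partial_y^{-1}$ couples all elements in a strip), but ``the contributions telescope cleanly'' is an assertion, not an argument; nothing in the proposal shows that a local bubble ansatz admits coefficients achieving this cancellation. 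The paper sidesteps this entirely: it defines the correction \emph{implicitly}, band by band, as the solution of a small variational problem $a_h(\mathcal{L}_{B_i^x}(E^xu),\Pi v)=-\langle\partial_y^{-1}\partial_x E^xu,\partial^2_{xy}v\rangle_{B_i^x}$ posed on $U_h(B_i^x)$ (a single bi-$k$ polynomial on the whole band vanishing on $\partial B_i^x$, which forces $w_h=0$ at vertices), then reduces everything to a $(k-1)^2\times(k-1)^2$ linear system whose matrix $(d-c)^2(D\otimes K)+h^2(K\otimes D)$ is shown to be uniformly invertible. Well-posedness replaces explicit construction, and this is what makes the two-dimensional case tractable.

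Two further gaps. First, your treatment of the $\alpha=2k$ case (``invoke orthogonality at the Gauss points a second time'') misses the actual mechanism: the right-hand side of the band system satisfies the \emph{graded} bound $\|F_p\|_\infty\lesssim h^{\min(\alpha,2k+2-p)}|\ln h|^{1/2}\|u\|_{\alpha+1}$, so different components of the correction decay at different rates $h^{2k+2-r}$, and extracting this requires a block rescaling of the system (the matrix $B$ with $\det B=\prod_r\det(A'_{r,r})+O(h^4)$), not another application of $P_k(G_j)=0$. Second, you never address test functions $v\in U_h$ that do not vanish on the band boundaries: the residual $(R_i^x(E^xu),v)$ vanishes only for $v\in U_h(B_i^x)$, and the decomposition $v=v_h+\tilde v$ with $\tilde v$ linear in $x$, together with the estimates of $J_1$ and $J_2$, is a substantial part of the proof of \eqref{correction_2} that your plan omits. (Minor point: the $|\ln h|^{1/2}$ in \eqref{ineq_1} comes from the Sobolev embedding $\|u\|_{\alpha,\infty,B_i^x}\lesssim|\ln h|^{1/2}\|u\|_{\alpha+1,B_i^x}$ used in estimating $F$, not from a discrete Green's function argument --- that argument appears later, in the proof of the nodal superconvergence theorem.)
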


In the rest of this section, we will first construct $w_h$ and then
verify that $w_h$ satisfies Proposition \ref{theorem_ver}.



\subsection{Construction}
 In this  subsection, we construct a suitable correction function
$w_h$ by introducing some special operators.
 Our device is much transparent and simpler than that in
\cite{Chen.C.M2012} for the finite element method, where a complex
iterative procedure is used.

We begin with notations and preliminaries.
 Since $\T_h$ is a partition of rectangles, there exist $a=x_0< x_1<\ldots,<x_m=b$ and $c=y_0< y_1<\ldots,<y_n=d$
  such that
\[
    \cal T_h=\{\tau_{i,j}:
    \tau_{i,j}=[x_{i-1},x_i]\times[y_{j-1},y_j], i\in\bZ_m,
    j\in\bZ_n\}.
 \]
  We denote by
  $B_i^x=[x_{i-1},x_i]\times [c,d], i\in\bZ_m $, the element-band
  along $x$-direction and $B_j^y= [a,b]\times[y_{j-1},y_j], j\in\bZ_n $,
  the element-band along $y$-direction, respectively.  For any rectangle $B\subset\Omega$, we define
\[
   U_{h}(B)=\{v\in C(\Omega): v|_{B}\in \mathbb{P}_k(x)\times \mathbb{P}_k(y), v|_{\partial
  B}=0\}.
\]
Note that when $k=1$, $U_{h}(B)=\{0\}$.


For all $i\in \bZ_m$, let
  ${\cal L}_{B_i^x} : H^1_0(\Omega)\rightarrow U_h(B_i^x)$ be the
  operator which maps
  $w\in H^1_0(\Omega)$ to ${\cal L}_{B_i^x}(w)$ defined by
\begin{equation}\label{correction_1}
   a_{h}(\cal L_{B_i^x}(w), \Pi v)=-\langle\partial_y^{-1}\partial_x w,\partial^2_{x,y}v\rangle_{B_i^x},\ \ \forall v\in
   U_{h}(B_i^x).
\end{equation}

  Note that on one hand, given $w\in H^1_0(\Omega)$,
\[
  -\langle\partial_y^{-1}\partial_x
   w,\partial^2_{x,y}v\rangle_{B_i^x},\ \ \forall v\in U_h(B_i^x)
\]
  is a bounded linear functional on $U_h(B_i^x)$.
  On the other hand,
  the coercivity and continuity of the bilinear form
  $a_h(\cdot,\Pi\cdot)$ have been established in \cite{zhang;zou2012}. Then by the Lax-Milgram Lemma,
  \eqref{correction_1} has a unique solution and
thus the operator $\cal L_{B_i^x}$ is well defined.

 We define  a global operator ${\cal L}^x:
  H^1_0(\Omega)\rightarrow U_h$ by
\[
  \cal L^x(w)|_{B_i^x} : =\cal L_{B^x_i}(w),\ \ \forall i\in\bZ_m.
\]
  Since $\cal L_{B_i^x}(w)=0$ on the boundary $\partial B_i^x$,
   $\cal L^x(w)=0$ on all $\partial B_i^x, i\in\bZ_m$. Consequently,
  $\cal L^x(w)=0$ at all vertices.

  By a slight modification, we can define another operator
  $\tilde{\cal L}^x
:H^1_0(\Omega)\rightarrow U_h$  by letting
\[
  \tilde{\cal L}^{x}(w)|_{B_i^x} : =\tilde{\cal L}_{B_i^x}(w) ,\ \  \forall
  i\in\bZ_m,
\]
 where the local operator  $\tilde{\cal L}_{B_i^x}: H^1_0(\Omega)\rightarrow U_h(B_i^x)$
 is defined by
\begin{equation}\label{correction4}
   a_h(\tilde{\cal L}_{B_i^x}(w),\Pi v)=-\langle \partial_x^{-1}\partial_yw,\partial^2_{x,y}v\rangle_{B_i^x},\ \ \forall v\in U_h(B_i^x).
\end{equation}

 By the same token, we define
  ${\cal L}_{B_j^y}$, $\tilde{\cal L}_{B_j^y}$, ${\cal L}^y$, and $\tilde{\cal L}^y$.

  Next we define some projectors. Let $P_r, r\ge 0$ be the Legendre polynomial of degree $r$ and
denote by
\[
   \phi_0(t)=\frac{1-t}{2}, \ \ \phi_1(t)=\frac{1+t}{2}, \ \ \phi_{r+1}(t)=\int_{-1}^t P_r(s)ds,\ r\ge 1,
\]
  the series of Lobatto polynomials on the interval $[-1,1]$.
With these Lobatto polynomials, we have the following expansion for
all $v\in H^1(\Omega)$ and $(x,y)\in B_i^x,i\in\bZ_m$ along
$x$-direction
\[
    v(x,y)=\sum_{r=0}^{\infty}b_r(y)\phi_r(s),
\]
 where $s=(2x-x_i-x_{i-1})/h_i^x\in[-1,1]$,
\[
  b_0(y)=v(x_{i-1},y),\  b_1(y)=v(x_i,y),\\
\]
 and
\begin{equation}\label{esti_coeffi}
   b_r(y)=\frac{2r-1}{2}\int_{-1}^{1}\partial_s{v}(x,y)\phi'_r(s)ds,\ \
   r\ge 2.
\end{equation}
   Next, we define a projector $Q^x_p, p\ge 1$ along the $x$-direction. Given $(x,y)\in \Omega$, there exists an $i\in \bZ_m$
  such that $(x,y)\in B_i^x$,  we then define
\[
   (Q^x_pv)(x,y)=\sum_{r=0}^{p}b_r(y)\phi_r(s).
\]
Obviously, $Q^x_p, p\ge 1$ is a bounded operator and $Q^x_pv=v$ for
all $v(\cdot,y)\in \mathbb{P}_{p} $. Consequently, by
 the Bramble-Hilbert lemma, there holds for all $(x,y)\in B_i^x$
\begin{equation}\label{eq9}
  |(v-Q^x_pv)(x, y)|\lesssim
  h^{p}\int_{x_{i-1}}^{x_i}|\partial_x^{p+1}v(x,y)|dx
\end{equation}
   and
\begin{equation}\label{deq}
  |\partial_x(v-Q^x_pv)(x, y)|\lesssim
  h^{p-1}\int_{x_{i-1}}^{x_i}|\partial_x^{p+1}v(x,y)|dx.
\end{equation}
  These inequalities will be  frequently used in our later
  analysis. Moreover, by the properties of Legendre and Lobatto polynomials,
\begin{equation}\label{orth-prop}
  \partial_x(v-Q^x_pv)(\cdot,y) \bot \mathbb{P}_{p-1},\  \ \ (v-Q^x_pv)(\cdot,y)\bot
  \mathbb{P}_{p-2},\forall y\in [c,d],
\end{equation}
where $\mathbb{P}_{-1}=\emptyset$.
 Noticing that $\phi_r(\pm 1)=0, r\ge 2$, we have
\begin{equation}\label{interqx}
  (Q^x_pv)(x_i,y)=v(x_i,y),\ \ (Q^x_pv)(x_{i-1},y)=v(x_{i-1},y),\
  \forall y\in[c,d].
\end{equation}

  The projector $Q^y_p,p\ge 1$ along $y$-direction   can be defined similarly.
   With \eqref{interqx} and counterpart properties in the $y$-direction, we define an interpolation
\begin{equation}\label{interpolation}
  v_I=Q^x_kQ^y_kv
\end{equation}
and the residuals
\[
\ \  E^xv=v-Q^x_kv,\ \ E^yv=v-Q^y_kv,
\]
  then we have
\[
 (v-v_I)(P)=0, \forall P\in\N_h,
\]
   and
\begin{equation}\label{decomposion}
   v-v_I=E^xv+E^yv-E^yE^xv.
\end{equation}


We are now in a perfect position to construct our correction
 function $w_h$. Let
\begin{equation}\label{correction}
   w_h=\cal L^{x}(E^xu)+\cal L^{y}(E^yu)+\tilde{\cal L}^{x}(E^xu)+\tilde{\cal L}^{y}(E^yu)-\cal L^{y}(E^yE^xu)-\tilde{\cal L}^{y}(E^yE^xu).
\end{equation}

 Obviously, $w_h\in U_h$ and $w_h(P)=0$ for all  $P\in\N_h$.


\subsection{Analysis}

   In this subsection, we shall prove $w_h$ defined   by \eqref{correction} satisfies all  properties listed in Proposition
  \ref{theorem_ver}.  For simplicity,  we  assume  in this subsection that
\[
   h=h_{\tau}^x=h_{\tau}^y,\ \ \forall \tau\in\cal T_h.
\]

  Consider  $\cal
  L^x(E^xu)$,  the first term of $w_h$. For this purpose, we  need to present \eqref{correction_1} in
  its linear algebraic form.
 We begin with a presentation of a basis of $U_h(B_i^x), i\in\bZ_m$.
For all $(x,y)\in B_i^x$ and $0\le p,q\le k$, let
\begin{equation}\label{basefun1}
 \Psi_{p,q}(x,y)=
\phi_p(s)\phi_q(t),
\end{equation}
 where
\[
   s=(2x-x_i-x_{i-1})/h,\ \ t=(2y-d-c)/(d-c).
\]
Then the function system $\{\Psi_{p,q},2\le p,q\le k\}$ constitutes
a basis of $U_{h}(B_i^x)$.
 Since $\cal L_{B_i^x}(E^xu)\in U_h(B_i^x)$, we have the representation
\[
\cal L_{B_i^x}(E^xu)=\sum_{p,q=2}^{k}w_{p,q}\Psi_{p,q}.
\]
  Let
\[
    D=(d_{p,q})_{(k-1)\times(k-1)},\ \
    K=(m_{p,q})_{(k-1)\times(k-1)},
\]
  where
\[
   d_{p,q}=\langle \phi'_p,\phi'_q\rangle_{[-1,-1]}, \ \ \ \ m_{p,q}=-\langle\partial^{-1}\phi_{p},\phi'_{q}\rangle_{[-1,-1]},\  2\le p,q\le
   k
 \]
    with the discrete inner product defined by
\[
  \langle v_1,v_2\rangle_{[-1,-1]}=\sum_{r=1}^kA_rv_1(G_r)v_2(G_r).
\]
   By\cite{DavisRabinowitz1984}(p98, (2.7.12)),
\begin{equation}\label{gaudruature_error}
   \langle v_1,v_2\rangle_{[-1,-1]}=\int_{-1}^1 (v_1v_2)(x)
   dx-c_k(v_1v_2)^{(2k)}(\xi),
\end{equation}
  where
  $c_k=\frac{2^{2k+1}(k!)^4}{(2k+1)[(2k)!]^3}$ and $\xi\in(-1,1)$.
  Taking $v=\Psi_{r,l}, r,l=2,\ldots, k$ in \eqref{correction_1},
 we derive
\begin{equation}\label{equa1}
    \sum_{p,q=2}^{k}\Big((d-c)^2d_{p,r}m_{q,l}
    +h^2d_{q,l}m_{p,r}\Big)w_{p,q}=f_{r,l},
\end{equation}
 where
\begin{equation}\label{eq5}
    f_{r,l}=-(d-c)h\langle\partial_y^{-1}\partial_xE^xu, \partial^2_{x,y}\Psi_{r,l}\rangle_{B_i^x}.
\end{equation}

 Denote the unknowns $X=(X_2,\ldots
   X_k)^{T}$ and the right-hand side $F=(F_2,\ldots, F_k)^{T}$ with vectors
\[
   X_r=(w_{r,2},\ldots, w_{r,k})^{T},\ F_r=(f_{r,2},\ldots,
   f_{r,k})^T,\
   r=2,\ldots,k.
\]
  Then \eqref{equa1} can be rewritten as
\begin{equation}\label{matrix_form}
  \Big((d-c)^2(D\otimes K)+h^2(K\otimes D)\Big)X=F,
\end{equation}
  where for two matrices $B_1=(b^1_{p,q})_{k\times k}$ and $B_2=(b^2_{p,q})_{k\times k}$,
 the tensor product $B_1\otimes B_2$ is a matrix of $k^2\times k^2$ defined by
\[
   B_1\otimes B_2= (B_{p,q})_{k\times k},\ \ B_{p,q}=b^1_{p,q}B_2,\ \ \forall\  p,q\le k.
\]

\medskip
 With the linear system \eqref{matrix_form}, the study of the properties of $\cal
  L^x(E^xu)$ is reduced to the estimation of the vector $F$ and the matrix $A=(d-c)^2(D\otimes K)+h^2(K\otimes D)$.

    We first estimate the vector $F$.
\begin{lemma}\label{lemma1}
   If $u\in H^{\alpha+1}(\Omega),\alpha\ge k+1$, then
\begin{equation}\label{eq8}
  \|F_p\|_{\infty}\lesssim h^{\min(\alpha,2k+2-p)}|{\rm{ln}}h|^{\frac 12}\|u\|_{\alpha+1,B_i^x},\ \ p=2,\ldots, k.
\end{equation}
\end{lemma}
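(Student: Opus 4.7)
The plan is to reduce $f_{p,l}$ to a pure tensor-product Gauss quadrature remainder and then bound that remainder via the Gauss error formula together with a derivative-counting argument. Using $\partial^2_{xy}\Psi_{p,l}=\tfrac{4}{h(d-c)}\phi'_p(s)\phi'_l(t)$ with $\phi'_r=P_{r-1}$, the definition \eqref{eq5} rewrites as
\[
f_{p,l} \;=\; -4\,\langle G,\,P_{p-1}(s)\,P_{l-1}(t)\rangle_{B_i^x},\qquad G:=\partial_y^{-1}\partial_x E^x u.
\]
By the orthogonality in \eqref{orth-prop} (applied with $p=k$), $\partial_x E^x u(\cdot,y)\perp\mathbb{P}_{k-1}$ in $L^2(x_{i-1},x_i)$; integrating in $y$ preserves this $x$-orthogonality, so $G(\cdot,y)\perp\mathbb{P}_{k-1}$ as well. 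Since $P_{p-1}\in\mathbb{P}_{k-1}$ for every $p\le k$, the exact integral vanishes for all $p,l$:
\[
\int_{B_i^x} G\,P_{p-1}(s)\,P_{l-1}(t)\,dx\,dy \;=\; \int_c^d P_{l-1}(t)\!\left(\int_{x_{i-1}}^{x_i}\!G\,P_{p-1}\,dx\right)\!dy \;=\; 0,
\]
so that $f_{p,l}$ equals $-4$ times the pure Gauss quadrature remainder.

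The dominant contribution comes from the $x$-direction Gauss error on each element $\tau\subset B_i^x$, which by \eqref{gaudruature_error} is proportional to $h^{2k+1}\,\partial_x^{2k}\bigl(G\,P_{p-1}(s_x)\bigr)$. Expanding by Leibniz, $P_{p-1}^{(2k-j)}\equiv 0$ unless $j\ge 2k-p+1$; for such $j$ one has $j\ge k+1$, hence $\partial_x^{j}(Q^x_k u)\equiv 0$ and $\partial_x^j G=\partial_y^{-1}\partial_x^{j+1}u$. Incorporating the chain-rule factor $(2/h)^{2k-j}$ produced by $\partial_x^{2k-j}P_{p-1}(s_x)$ yields
\[
\bigl|\partial_x^{2k}(G\,P_{p-1}(s_x))\bigr|\;\lesssim\;\sum_{j=2k-p+1}^{2k}h^{j-2k}\bigl|\partial_y^{-1}\partial_x^{j+1}u\bigr|.
\]
The dominant term ($j=2k-p+1$), multiplied by $h^{2k+1}$ from the quadrature, a further $O(h)$ from the $y$-quadrature sum, summed over the $O(1/h)$ elements in the band and paired with $\|P_{l-1}\|_\infty=O(1)$, yields the rate $h^{2k-p+2}\|\partial_y^{-1}\partial_x^{2k-p+2}u\|_{L^\infty(B_i^x)}$. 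The parallel $y$-direction quadrature error carries no mesh-scale blowup, because $P_{l-1}(t)$ is governed by the fixed macroscopic length $d-c$; hence it contributes only $O(h^{2k+1})$, which is subdominant. When $\alpha<2k-p+1$, truncating the Leibniz sum at the largest available derivative $j+1=\alpha+1$ delivers the alternative $h^\alpha$ branch of the $\min$.

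The main obstacle will be controlling $\|\partial_y^{-1}\partial_x^{2k-p+2}u\|_{L^\infty(B_i^x)}$ by $\|u\|_{\alpha+1,B_i^x}$ in the borderline-regularity regime (most critically $\alpha=k+2$ with $p=k$, where only an $H^1$-norm of this derivative is directly available). This is exactly the limiting two-dimensional embedding $H^1\not\hookrightarrow L^\infty$, in which a logarithmic (or discrete) Sobolev inequality supplies the $|\ln h|^{1/2}$ factor appearing in the stated bound. Careful bookkeeping is also needed to absorb the lower-order Leibniz terms uniformly in $l$ and to handle the low-regularity case where $\alpha$ is close to $k+1$.
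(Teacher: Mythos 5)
Your overall skeleton is the right one and matches the paper's: use the orthogonality \eqref{orth-prop} to make the exact integral vanish so that $f_{p,l}$ is a pure quadrature remainder, expand the $x$-direction Gauss error by Leibniz to extract the factor $h^{2k+2-p}$, and pay a $|\ln h|^{1/2}$ for the limiting Sobolev embedding at the end. However, there is a genuine gap at the central step. You apply the Gauss remainder formula \eqref{gaudruature_error} directly to $G=\partial_y^{-1}\partial_x E^x u$, which requires evaluating $\partial_x^{2k}\bigl(G\,P_{p-1}(s)\bigr)$ at an intermediate point; through $\partial_x^{2k}G$ this involves $\partial_x^{2k+1}u$ pointwise. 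Under the hypothesis $u\in H^{\alpha+1}$ with $\alpha\le 2k$ this derivative does not exist (even for $\alpha=2k$ it is only in $L^2$, so it cannot be evaluated at the unknown point $\xi$). Your proposed fix --- ``truncating the Leibniz sum at the largest available derivative'' --- is not a legitimate operation: the remainder formula is an identity in the full $2k$-th derivative, and one cannot discard the terms for which that derivative fails to exist. This is precisely why the paper first inserts the projection $Q^x_{\alpha-1}$: it writes $G=\partial_y^{-1}\partial_x\bigl(Q^x_{\alpha-1}E^xu\bigr)+\partial_y^{-1}\partial_x\bigl(E^xu-Q^x_{\alpha-1}E^xu\bigr)$, applies the Gauss error formula only to the first piece $\Theta$, which is a piecewise polynomial of degree $\alpha-1$ in $x$ (so all $2k$ derivatives exist and the Leibniz sum terminates automatically at $j\le\alpha-2$), and bounds the second piece crudely by $h^{\alpha-1}|u|_{\alpha,\infty,B_i^x}$ via \eqref{eq9}--\eqref{deq}. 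It is this second piece, not a truncated Leibniz sum, that rigorously produces the $h^{\alpha}$ branch of the $\min$ in \eqref{eq8}.

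A secondary issue: your treatment of the $y$-direction quadrature error is both unnecessary and problematic. Applying the one-dimensional Gauss error formula in $y$ on each element would require $\partial_y^{2k}G$, i.e.\ mixed derivatives of $u$ of total order $2k+1$, again unavailable for $\alpha=k+2$ when $k\ge 3$; and the claim that this error is $O(h^{2k+1})$ because $t$ is a macroscopic variable overlooks that $G$ itself is only piecewise smooth in $y$ across the elements of the band. The paper sidesteps this entirely: since the exact $x$-integral $\int_{x_{i-1}}^{x_i}\Theta\,\partial^2_{x,y}\Psi_{p,q}\,dx$ vanishes for \emph{each fixed} $y$, the discrete $y$-sum of the $x$-quadrature errors is bounded simply by $\max_\tau|e^\tau_{p,q}|$ times the total $y$-weight, with no $y$-direction error formula needed. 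You should restructure your argument accordingly rather than treating the remainder as a genuine tensor-product quadrature error.
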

\begin{proof}
 For all $\tau\in B_i^x, i\in\bZ_m$,
  we  denote Gauss points  $g_{r,l}^\tau=(g_{\tau,r}^x, g_{\tau,l}^y), r,l\in\bZ_k $.
  Let  $\Theta=\partial_y^{-1}\partial_x (Q^x_{\alpha-1}E^xu)$.
  Note that for any fixed $y$, $\partial^2_{x,y}\Psi_{p,q}(\cdot,y)\in{\mathbb P}_{k-1}$, by
  the orthogonality \eqref{orth-prop} and the fact that $\Theta=\partial_x (Q^x_{\alpha-1}E^x(\partial_y^{-1}u))$,
  we have
\[
  \int_{x_{i-1}}^{x_i}\Theta\partial^2_{x,y}\Psi_{p,q} dx=
  \int_{x_{i-1}}^{x_i}(\partial_xE^x(\partial_y^{-1}u))\partial^2_{x,y}\Psi_{p,q}
  dx=0,
\]
  thus
\[
   \langle\Theta,\partial^2_{x,y}\Psi_{p,q}\rangle_{B_i^x}=-\sum_{\tau\in
      B^x_i}\sum_{l=1}^kA_{\tau,l}^ye^{\tau}_{p,q}(g_{\tau,l}^y),
\]
 where
\[
   e^{\tau}_{p,q}(y)=\int_{x_{i-1}}^{x_i}\Theta\partial^2_{x,y}\Psi_{p,q} dx-
   \sum_{r=1}^{k}A_{\tau,r}^x\left(\Theta\partial^2_{x,y}\Psi_{p,q}\right)(g_{\tau,r}^{x},y)
\]
  is the error of  Gauss quadrature  for calculating the integral of $\Theta\partial^2_{x,y}\Psi_{p,q}$ in
  $[x_{i-1},x_i]$.
  By \eqref{gaudruature_error}, there exists
  a point $\xi_i\in(x_{i-1},x_i)$ such that
\[
e^{\tau}_{p,q}(y)={c_k}\frac{h^{2k+1}}{2^{2k+1}}\partial^{2k}_{x}\left(\Theta\partial^2_{x,y}\Psi_{p,q}\right)(\xi_i,y).
\]
   Note that
\[
  \partial_y\Psi_{p,q}=\phi'_q=O(1),\ \ \
  \partial^{(r)}_x\Psi_{p,q}=\big(\frac{2}{h}\big)^r\phi^{(r)}_p=O( h^{-r}),\ \ \forall\  r\le p
\]
  and
\[
   \|\partial_{x}^{j}\Theta\|_{\infty,B_i^x}\lesssim\|\partial_{x}^{j+1}E^x(\partial_y^{-1}u)\|_{\infty,B_i^x}\lesssim |u|_{\alpha-1,\infty, B_i^x},\ \
   \forall\  j<\alpha-1.
\]
   Then, by  the Leibnitz formula for derivatives,
\[
  |e^{\tau}_{p,q}|\lesssim h^{2k+1-p}|u|_{\alpha-1,\infty,B_i^x},\  2\le q\le
  k,
\]
  which implies
\begin{equation}\label{equation1}
    \left|\langle\Theta,\partial^2_{x,y}\Psi_{p,q}\rangle_{B_i^x}\right|\lesssim h^{2k+1-p}|u|_{\alpha-1,\infty,B_i^x}.
\end{equation}
  On the other hand, by the  approximation property of $Q_p^x, p\ge 1$,
\[
   |E^xu-Q^x_{\alpha-1}E^xu|_{1,\infty,B_i^x}\lesssim
   h^{\alpha-1}|E^xu|_{\alpha,\infty,B_i^x}\lesssim
   h^{\alpha-1}|u|_{\alpha,\infty,B_i^x}.
\]
   Consequently,
\begin{equation}\label{equation1.2}
   \left|\langle\partial_y^{-1}\partial_x(E^xu-Q^x_{\alpha-1}E^xu),\partial^2_{x,y}\Psi_{p,q}\rangle_{B_i^x}\right|\lesssim
   h^{\alpha-1}|u|_{\alpha,\infty,B_i^x}.
\end{equation}
    Furthermore, by the definition \eqref{eq5},
\[
  -\frac{f_{p,q}}{h(d-c)}=\langle\partial_y^{-1}\partial_x(E^xu-Q^x_{\alpha-1}E^xu),\partial^2_{x,y}\Psi_{p,q}\rangle_{B_i^x}+
    \langle\Theta,\partial^2_{x,y}\Psi_{p,q}\rangle_{B_i^x}.
\]
Substituting \eqref{equation1} and \eqref{equation1.2} into the
above equation,  we obtain
\[
   |f_{p,q}|_{\infty}\lesssim
   h^{\min(\alpha,2k+2-p)}\|u\|_{\alpha,\infty,B_i^x}.
\]
   Now recall from standard regularity argument \cite{adams},
\[
   \|u\|_{\alpha,\infty,B_i^x}\lesssim |{\rm{ln}}h|^{\frac 12}\|u\|_{\alpha+1,B_i^x},
\]
   the desired estimate \eqref{eq8} follows.
\end{proof}
\medskip

  We next study  properties of the matrix $A=(d-c)^2(D\otimes K)+h^2(K\otimes D)$.
By the orthogonality of Legendre polynomials and the fact that
$k$-point Gauss quadrature is exact for polynomials of degree
$2k-1$, we have
\[
   d_{p,q}=(P_{p-1},P_{q-1})=0, p\neq q, \ \ \
   d_{p,p}=\frac{2}{2p-1},\ \ p,q=2,\ldots, k.
\]
 In other words, $D$ is a diagonal matrix. Similarly,
\begin{eqnarray}\label{mij}
  m_{p,q}=-(\partial^{-1}\phi_p,\phi'_q)=(\phi_p,\phi_q),\ p,q\le k, p+q\le
  2k-1.
\end{eqnarray}
  By the quasi-orthogonal property of Lobatto polynomials,
 ${m}_{p,q}\neq 0$ only when $p-q=0,\pm 2$.
 Consequently, $K$ is a five-diagonal matrix.

\begin{lemma}
   The matrix $K$ is symmetric and  positive definite.
\end{lemma}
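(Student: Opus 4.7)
The plan is to build on the identity $m_{p,q} = (\phi_p, \phi_q)$ recorded in \eqref{mij}, which is valid whenever $p + q \le 2k-1$, and to handle the single remaining entry $m_{k,k}$ separately, since there the integrand $(\partial^{-1}\phi_k)\phi_k'$ has degree exactly $2k$ --- just beyond the range where the $k$-point Gauss quadrature is exact.

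For symmetry, since $p, q \in \{2,\ldots, k\}$, the constraint $p + q \le 2k - 1$ fails only at the diagonal entry $p = q = k$. Everywhere else $m_{p,q} = (\phi_p, \phi_q)$ is manifestly symmetric in $(p,q)$, and the $(k,k)$ entry lies on the diagonal. This gives $K = K^\top$.

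For positive definiteness, I introduce the companion matrix $\tilde K$ with entries $\tilde K_{p,q} := (\phi_p, \phi_q)$, $p, q = 2, \ldots, k$. By the preceding discussion, $K$ and $\tilde K$ coincide except at the $(k,k)$ entry, where they differ by $\delta := m_{k,k} - (\phi_k, \phi_k)$. The matrix $\tilde K$ is the Gram matrix of the linearly independent Lobatto polynomials $\{\phi_2, \ldots, \phi_k\}$, hence symmetric positive definite. To pin down the sign of $\delta$, I apply the Gauss quadrature error formula \eqref{gaudruature_error} to $f = (\partial^{-1}\phi_k)\phi_k'$, a polynomial of degree exactly $2k$: its $(2k)$-th derivative is the constant $(2k)!$ times the leading coefficient of $f$, which is strictly positive because both $\partial^{-1}\phi_k$ and $\phi_k' = P_{k-1}$ have positive leading coefficients. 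Combined with $c_k > 0$, this forces $\delta > 0$. Then for any nonzero $X = (x_2, \ldots, x_k)^\top$, setting $v = \sum_{p=2}^k x_p \phi_p \not\equiv 0$ yields
\begin{equation*}
X^\top K X \;=\; X^\top \tilde K X + \delta\, x_k^2 \;=\; \int_{-1}^{1} v(s)^2\, ds + \delta\, x_k^2 \;>\; 0,
\end{equation*}
by linear independence of the Lobatto basis.

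The main obstacle is certifying that the quadrature defect $\delta$ has the right sign; this reduces to a short bookkeeping of leading coefficients through the identity $\phi_k = (P_k - P_{k-2})/(2k-1)$. Everything else is routine: symmetry falls straight out of \eqref{mij}, and the Gram-matrix argument is standard.
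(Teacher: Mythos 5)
Your proof is correct and follows essentially the same route as the paper's: both decompose $K$ as the Gram matrix of $\{\phi_2,\dots,\phi_k\}$ plus a single positive perturbation at the $(k,k)$ entry, with the sign of that perturbation certified via the Gauss quadrature error formula \eqref{gaudruature_error} applied to the degree-$2k$ integrand $(\partial^{-1}\phi_k)\phi_k'$. The only cosmetic difference is that you evaluate the $(2k)$-th derivative through the leading coefficient, whereas the paper uses the Leibnitz formula to write it as $c_k\binom{2k}{k-1}\|\phi_k\|^2_{k,\infty}$.
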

\begin{proof}
Let $K_1=(m^1_{p,q})_{(k-1)\times(k-1)}$ with $
m^1_{p,q}=(\phi_p,\phi_q),p,q=2,\ldots, k.$ By \eqref{mij},
\[
  m_{p,q}^1= m_{p,q}, \ \forall\ p,q\le k, p+q\le
  2k-1.
\]
   We next study the relationship of $m_{k,k}^1$ and $m_{k,k}$.
   Denoting
\[
  e_k=m_{k,k}-m^1_{k,k},
\]
  we have from \eqref{gaudruature_error} and the Leibnitz formula  for
  derivatives
\[
  e_k=c_k((\partial^{-1}\phi_k)\phi'_k)^{(2k)}(\xi)=c_k
  \binom{2k}{k-1}\|\phi_k\|^2_{k,\infty}>0.
\]
  Then
\[
   K=K_1+K_2,
\]
 where $K_2=(m^2_{p,q})_{(k-1)\times(k-1)}, p,q=2,\ldots,k$ with
\[
   m^2_{k,k}=e_k>0,\ \ m^2_{p,q}=0, \ {\rm{otherwise}}.
\]
  Since $K_1$ is  symmetric and positive definite,
 $K$ is also symmetric and positive definite.
\end{proof}

    Note that both $D$ and $K$ are symmetric and positive definite and
independent of $h$, then both $D\otimes K$ and $D\otimes K$ are also positive definite. By the definition of $A$, we have
\[
\det(A)=\det((d-c)^2(D\otimes K)) +O(h^2).
\]
 Therefore, when $h$ is sufficiently small, $\det A$
 is positive and uniformly bounded from below. In other
words, when $h$ is sufficiently small
\begin{equation}\label{detb}
0< \det(A)^{-1}\lesssim C,
\end{equation}
where $C$ is independent of $h$.

\medskip

 With the estimate for $F$ and properties
of $A$, we are now ready to estimate $\cal L_{B_i^x}(E^xu)$.

\begin{lemma} Assume $u\in H^{\alpha+1}(\Omega), \alpha=k+2(or \  2k)$.
Then for sufficiently small $h$ and all $i\in\bZ_m$
\begin{equation}\label{eq6}
    \|X_r\|_{\infty}\lesssim h^{k+2+\max(0,\alpha-k-r)}|{\rm{ln}}h|^{\frac 12}\|u\|_{\alpha+1, B_i^x},\ \ r=2,\ldots,
    k.
\end{equation}
   Consequently,
\begin{equation}\label{eq66}
   \left\|\cal L_{B_i^x}(E^xu)\right\|_{\infty}\lesssim
   h^{k+2}|{\rm{ln}}h|^{\frac 12}\|u\|_{\alpha+1,B_i^x}.
\end{equation}
\end{lemma}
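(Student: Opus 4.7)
The plan is to read \eqref{matrix_form} as a $(k-1)\times(k-1)$ block linear system and to exploit the fact that $D$ is diagonal while $K$ is five-diagonal with nonzero entries only on positions $|p-r|\in\{0,2\}$. First I would write the $(r,p)$-th block of $A$ explicitly,
\[
A_{r,p}=\begin{cases}M_r:=(d-c)^2 d_{r,r}K+h^2 m_{r,r}D, & p=r,\\ h^2 m_{r,p}D, & |p-r|=2,\\ 0, & \text{otherwise},\end{cases}
\]
so that the $r$-th block-row of $AX=F$ reduces to
\[
M_r X_r = F_r - h^2 m_{r,r-2} D X_{r-2} - h^2 m_{r,r+2} D X_{r+2}.
\]

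Second, I would verify that $M_r$ is uniformly invertible. Since $K$ is symmetric positive definite with $h$-independent entries, and since $d_{r,r}=2/(2r-1)$ stays bounded away from $0$, a Neumann-series argument shows that for $h$ sufficiently small $M_r$ is invertible with $\|M_r^{-1}\|_\infty\le C$, the constant $C$ being independent of both $h$ and $r$. The displayed identity then yields the scalar recurrence
\[
\|X_r\|_\infty \le C\bigl(\|F_r\|_\infty + h^2\|X_{r-2}\|_\infty + h^2\|X_{r+2}\|_\infty\bigr),\qquad 2\le r\le k,
\]
with the convention that the $X_{r\pm 2}$ terms are dropped whenever the index falls outside $\{2,\ldots,k\}$.

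For the easier case $\alpha=k+2$, Lemma~\ref{lemma1} gives the uniform bound $\|F_r\|_\infty\lesssim h^{k+2}|\ln h|^{1/2}\|u\|_{\alpha+1,B_i^x}$ and $\max(0,\alpha-k-r)=0$ for every $r\ge 2$, so \eqref{eq6} is immediate from \eqref{detb}. For $\alpha=2k$ the sharper $r$-dependent bound requires a bootstrap argument. Starting from the coarse estimate $\|X_r\|_\infty\lesssim h^{k+2}|\ln h|^{1/2}$ delivered by \eqref{detb}, I would substitute iteratively into the recurrence: using $\|F_r\|_\infty\lesssim h^{2k+2-r}|\ln h|^{1/2}$, a straightforward induction on $j$ gives $\|X_r\|_\infty\lesssim\max(h^{2k+2-r},\,h^{k+2+2j})|\ln h|^{1/2}$ after $j$ passes. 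Once $j\ge\lceil(k-2)/2\rceil$ the residual exponent $k+2+2j$ exceeds $2k$, so $\|F_r\|_\infty$ dominates uniformly and \eqref{eq6} follows. The conclusion \eqref{eq66} is then the expansion $\mathcal{L}_{B_i^x}(E^x u)=\sum_{p,q=2}^k w_{p,q}\Psi_{p,q}$ combined with $\|\Psi_{p,q}\|_\infty\lesssim 1$ and the worst-case dominance $h^{k+2+\max(0,\alpha-k-r)}\le h^{k+2}$.

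The main obstacle I expect is the bookkeeping of the bootstrap: because of the $r\leftrightarrow r+2$ coupling, no single weighted $\ell^\infty$-norm contraction closes in one step (the ratio $h^2\mu_{r+2}/\mu_r$ is exactly of order one in the target scale), so one really has to track the improving residual exponent pass by pass. What makes the argument finite is that the gap between the initial $h^{k+2}$ bound and the target $h^{2k}$ is a fixed, $k$-dependent, power of $h^2$, so a bounded number of bootstrap rounds suffices.
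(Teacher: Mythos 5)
Your proposal is correct, and it shares the paper's essential skeleton: the same block reading of \eqref{matrix_form} using that $D$ is diagonal and $K$ couples only indices with $|p-r|\in\{0,2\}$, the same input estimate $\|F_r\|_\infty\lesssim h^{\min(\alpha,2k+2-r)}|\ln h|^{1/2}\|u\|_{\alpha+1,B_i^x}$ from Lemma \ref{lemma1}, and the same immediate disposal of the case $\alpha=k+2$ via \eqref{detb} and Cramer's rule. Where you diverge is the endgame for $\alpha=2k$. The paper rescales the unknowns and data by $h^{r-2k-2}$ (and the off-diagonal blocks by $h^{-|r-l|}$), observes that the rescaled system \eqref{linear_eq} has matrix equal to a block upper-triangular matrix plus an $O(h^4)$ perturbation, so its determinant is $\prod_r\det(A'_{r,r})+O(h^4)$, and applies Cramer's rule once to get $\|Y_r\|_\infty\lesssim 1$. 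You instead invert each diagonal block $M_r$ (uniformly, since $(d-c)^2d_{r,r}K$ is SPD independent of $h$) and run a finite bootstrap through the resulting recurrence; the key point you correctly identify is that the $h^2\|X_{r+2}\|_\infty$ coupling reproduces the target bound $h^{2k+2-r}$ exactly rather than contracting it, while the residual exponent $k+2+2j$ improves by two each pass, so $\lceil(k-2)/2\rceil$ passes suffice with constants that stay bounded because the number of passes is fixed. The two arguments are equivalent in substance --- your order-one coupling ratio is precisely why the paper's rescaled matrix is only triangular-plus-perturbation rather than diagonally dominant --- but yours is slightly more elementary (no determinant expansion of the rescaled matrix $B$) at the cost of the pass-by-pass bookkeeping, whereas the paper's weighted change of variables packages the whole induction into a single application of Cramer's rule. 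The deduction of \eqref{eq66} from \eqref{eq6} is the same in both.
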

\begin{proof}
   Note that
\[
    \left\|\cal
    L_{B_i^x}(E^xu)\right\|_{\infty}\lesssim\sum_{r=2}^k\|X_r\|_{\infty},
\]
   then \eqref{eq66} follows from \eqref{eq6}.
   We next show \eqref{eq6}.  When $u\in
  H^{k+3}(\Omega)$, By \eqref{eq8}, \eqref{detb} and the Cramer's rule, we have
\[
     \|X_r\|_{\infty}\lesssim h^{k+2}|{\rm{ln}}h|^{\frac 12}\|u\|_{k+3,B_i^x},\ \  r=2\ldots,k.
\]
   Then \eqref{eq6} is valid for $\alpha=k+2$. To
   prove \eqref{eq6} for the case $\alpha=2k$,
  we rewrite $A$ in its block matrix form $
A=(A_{r,l})_{(k-1)\times(k-1)}$,
  where each
\[
  A_{r,l}=(d-c)^2d_{r,l}K+h^2m_{r,l}D,\ \ \ r,l=2,\ldots,k
\]  is a
 $(k-1)\times(k-1)$ matrix. Let
\[
   A'_{r,l}=A_{r,l}h^{-|r-l|}
 \]
   and
\[
   Y_r= X_rh^{r-2k-2}|{\rm{ln}}h|^{-\frac 12}\|u\|^{-1}_{2k+1,B_i^x},\ \ F_r'=F_r h^{r-2k-2}|{\rm{ln}}h|^{-\frac
   12}\|u\|^{-1}_{2k+1,B_i^x}.
\]
Then both $A'_{r,l}$ and $F_r'$ are independent of
   $h$.
    By \eqref{eq8}, we have
\[
   \|F_r\|_{\infty}\lesssim h^{2k+2-r}|{\rm{ln}}h|^{\frac 12}\|u\|_{2k+1,B_i^x}.
\]
Multiplying the $r$-th
  equation of \eqref{matrix_form} with the factor $ h^{r-2k-2}|{\rm{ln}}h|^{-\frac 12}\|u\|^{-1}_{2k+1,B_i^x}$ , we
  have for all $r=2,\ldots,k$
\begin{equation}\label{linear_eq}
    h^4A'_{r,r-2}Y_{r-2}+A'_{r,r}Y_r+A'_{r,r+2}Y_{r+2}=F'_r,
\end{equation}
  where we use the notations $A_{2,0}=A_{3,1}=A_{k-1,k+1}=A_{k,k+2}=0.$
  Let $B=(B_{r,l})_{(k-1)\times(k-1)}$ with
\[
    B_{r,l}=A'_{r,l},\ \ r\le l,\ \ B_{r,l}=h^4A'_{r,l},\ \
    \rm{otherwise}.
\]
  Then \eqref{linear_eq} can be written as a linear system $BY=F'$.
  A direct calculation yields
\[
  det(B)=\prod_{r=2}^kdet(A'_{r,r})+O(h^4),
\]
 which means  that $B$ is uniformly
bounded from below.
  By Cramer's rule, each entry of $Y$ is bounded independent of $h$. In other words,
  $\|Y_r\|_{\infty}\lesssim 1$.  Consequently,
\[
   \|X_r\|_{\infty}\lesssim h^{2k+2-r} |{\rm{ln}}h|^{\frac 12}\|u\|_{2k+1,B_i^x},\ \
   r=2,\ldots,k.
\]
   This finishes our proof.
\end{proof}


%


\medskip

 To prove Proposition \ref{theorem_ver}, we still need to study the
 residual
\begin{eqnarray*}
   (R_i^x(w),v)=-\langle\partial^{-1}_y\partial_x w, \partial^2_{x,y}v\rangle_{B_i^x}-a_{h}(\cal L_{B_i^x}(w), \Pi v),\ \
 w\in H^1_0(\Omega)
\end{eqnarray*}
  for a general function $ v\in U_{h}$. Note that when $v\in
  U_h(B_i^x)$, we have
 $(R_i^x(w),v)=0.$

\begin{lemma}
 Assume  that $u\in H^{\alpha+1}(\Omega), \alpha=k+2(or \  2k)$.  Then for a general function $ v\in U_{h}$,
  \begin{equation}\label{eq7}
   |(R_i^x(E^xu),v)|
   \lesssim h^{\alpha}\|u\|_{\alpha+1, B_i^x}\|v\|_{1,B_i^x},\ \ \forall i\in\bZ_m.
\end{equation}
\end{lemma}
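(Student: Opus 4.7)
The plan is to reduce the estimate to the case where the test function $v$ is effectively linear in $x$ on the band $B_i^x$, by exploiting the fact that $(R_i^x(w),\cdot)$ vanishes on the local subspace $U_h(B_i^x)$ thanks to the defining identity \eqref{correction_1}. First I would decompose $v = Q_1^x v + (v - Q_1^x v)$. The interpolation property \eqref{interqx} gives $Q_1^x v \equiv v$ on the vertical lines $x=x_{i-1}$ and $x=x_i$, while $v\in U_h$ vanishing on $\partial\Omega$ forces $v - Q_1^x v$ to vanish on $y=c,d$ as well. Hence $(v - Q_1^x v)|_{B_i^x}$ vanishes on all of $\partial B_i^x$, and extending it by zero outside $B_i^x$ yields a globally continuous function $\tilde v \in U_h(B_i^x) \subset U_h$. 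Since \eqref{correction_1} then gives $(R_i^x(E^x u),\tilde v)=0$, we obtain
\[
(R_i^x(E^x u), v) \;=\; (R_i^x(E^x u), v - \tilde v),
\]
and on $B_i^x$ the difference $v - \tilde v$ coincides with $Q_1^x v$, which is linear in $x$ on each element; outside $B_i^x$, $v - \tilde v = v$.

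For the quadrature pairing $\langle\partial_y^{-1}\partial_x E^x u,\,\partial^2_{x,y}(v - \tilde v)\rangle_{B_i^x}$, the key observation is that $\partial^2_{x,y} Q_1^x v$ is constant in $x$ on each element, while $E^x u$ vanishes at $x=x_{i-1},x_i$ by \eqref{interqx}. Integration by parts in $x$ therefore annihilates the exact integral $\int_{B_i^x}\partial_y^{-1}\partial_x E^x u \cdot \partial^2_{x,y} Q_1^x v\,dx\,dy$, so the discrete pairing equals its own Gauss quadrature error. This error, bounded via \eqref{gaudruature_error}, the orthogonality \eqref{orth-prop} of $\partial_x E^x u$ against $\mathbb{P}_{k-1}$, and the Leibniz rule, yields the desired $h^\alpha\|u\|_{\alpha+1,B_i^x}\|v\|_{1,B_i^x}$, with the cases $\alpha=k+2$ and $\alpha=2k$ handled by different allocations of the derivatives between $E^x u$ and $Q_1^x v$.

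For the bilinear term $a_h(\cal L_{B_i^x}(E^x u),\Pi(v-\tilde v))$, I would rewrite $a_h$ through \eqref{bilinear2} and split the sum into elements inside and outside $B_i^x$. The inside contribution is controlled by combining the sup-norm bound \eqref{eq66} with the constant-in-$x$ structure of $\partial^2_{x,y}(v-\tilde v)$ on $B_i^x$ and the boundary vanishing $\cal L_{B_i^x}(E^x u)|_{\partial B_i^x}=0$. The outside contribution arises only from the nonlocal piece $\partial_x^{-1}\partial_y\cal L_{B_i^x}(E^x u)$, which for $x>x_i$ equals $\partial_y\!\int_{x_{i-1}}^{x_i}\cal L_{B_i^x}(E^x u)(x',y)\,dx'$, a function of $y$ alone; integration by parts in $y$, using that $\cal L_{B_i^x}(E^x u)=0$ at $y=c,d$, together with the $H^1$-bound on $v$, reduce this to the target rate.

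The main obstacle would be attaining the sharp rate $h^\alpha$ for $\alpha=2k$, where one must simultaneously exploit the exactness of the $k$-point Gauss rule on $\mathbb{P}_{2k-1}$, the orthogonality \eqref{orth-prop} of $\partial_x E^x u$ against $\mathbb{P}_{k-1}$, and the tensor-product structure of the reference element, without losing a power of $h$ to the $|\ln h|^{1/2}$ factor in \eqref{eq66}. The off-band contribution of the bilinear term, produced by the nonlocal $\partial_x^{-1}$ operator, is the most delicate accounting: a naive bound yields only $h^{k+2}$, so one must invoke the full boundary vanishing of $\cal L_{B_i^x}(E^x u)$ and additional cancellation in $y$ to upgrade the estimate to $h^{2k}$ when $k\geq 3$.
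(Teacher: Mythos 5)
Your reduction is the same as the paper's: on $B_i^x$ you split $v$ into the part that is linear in $x$ (your $Q_1^xv$, the paper's $\tilde v$) plus a remainder lying in $U_h(B_i^x)$, on which the residual vanishes by \eqref{correction_1}. From that point on, however, the proposal does not actually close the estimate, and the two places where it stalls are exactly where the paper's proof does its real work.

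First, for the pairing $\langle\partial_y^{-1}\partial_xE^xu,\partial^2_{x,y}\tilde v\rangle_{B_i^x}$ you propose to show that the exact integral vanishes and then bound the pure Gauss quadrature error via \eqref{gaudruature_error}. But the error kernel in \eqref{gaudruature_error} requires $2k$ $x$-derivatives of the integrand, and since $\partial^2_{x,y}\tilde v$ is constant in $x$ all of them land on $\partial_y^{-1}\partial_xE^xu$; for $\alpha=k+2$ the hypothesis $u\in H^{k+3}$ does not supply them, and a pointwise error kernel would also force a spurious $|{\rm ln}h|^{\frac12}$ into \eqref{eq7}, which has none. The paper avoids this by splitting $\Phi=\partial_y^{-1}\partial_xE^xu$ with the $y$-projection $Q_k^y$: the remainder $\Phi-Q_k^y\Phi$ is handled by Cauchy--Schwarz in $L^2$, while for $Q_k^y\Phi$ the product with $\partial^2_{x,y}\tilde v$ lies in $\mathbb{P}_{2k-1}$ in $y$, so the $y$-quadrature is exact, one integrates by parts in $y$, and the $x$-projection $Q_\alpha^x$ together with \eqref{orth-prop} delivers $h^\alpha$. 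Second, and more seriously, for $a_h({\cal L}_{B_i^x}(E^xu),\Pi\tilde v)$ you rely on the aggregate bound \eqref{eq66}, which is only $O(h^{k+2})$, and you concede that ``additional cancellation'' is needed to reach $h^{2k}$ without identifying it. That cancellation is precisely the missing idea: after integrating by parts, only the modes $\Psi_{2,q},\Psi_{3,q}$ of ${\cal L}_{B_i^x}(E^xu)$ couple to the $x$-linear test function $\tilde v$, because $\Psi_{p,q}(\cdot,y)\perp\mathbb{P}_1$ for $p>3$, and the componentwise estimates \eqref{eq6} give $\|X_2\|_{\infty}\lesssim h^{\alpha}$ and $\|X_3\|_{\infty}\lesssim h^{\alpha-1}$; this mode selection plus \eqref{eq6} is what yields the rate $h^\alpha$. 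Without it your argument cannot get past $h^{k+2}$ when $\alpha=2k$, so the proposal has a genuine gap at the heart of the lemma. (Your remark about the off-band contribution of $\partial_x^{-1}\partial_y{\cal L}_{B_i^x}(E^xu)$ is a fair observation about a point the paper treats tersely, but raising it does not substitute for the missing cancellation argument.)
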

\begin{proof}
  Note that  for all $B_i^x\subset\Omega, i\in\bZ_m$,
\[
   \phi_0(s)=\frac{x_i-x}{h},\ \ \phi_1(s)=\frac{x-x_{i-1}}{h}\notin
   U_h({B_i^x}),
\]
  where $s=(2x-x_i-x_{i-1})/h\in[-1,1]$.
 Then a general function $ v\in U_{h}$ has the decomposition
\[
 v(x,y)={v_h}(x,y)+\tilde{v}(x,y),\ \ \forall (x,y)\in B_i^x,
\]
  where ${v_h}\in U_{h}(B_{i}^x)$
and $\tilde{v}(x,y)=v(x_{i-1},y)\phi_0(s)+v(x_{i},y)\phi_1(s)$. By
\eqref{correction_1},
\begin{eqnarray*}
    (R_i^x(E^xu),v)
   =-\langle\partial_y^{-1}\partial_xE^xu, \partial^2_{x,y}\tilde{v}\rangle_{B_i^x}- a_{h}(\cal L_{B_i^x}(E^xu), \Pi
   \tilde{v})=-J_1-J_2.
\end{eqnarray*}

  We next estimate $J_1$ and $J_2$ separately.
  Let
  $\Phi=\partial_y^{-1}\partial_xE^xu$. By
  \eqref{eq9}-\eqref{deq} and the fact that
  $\partial^{k+1}_y\Phi=\partial_xE^x(\partial^k_yu)$, we have
  for all $(x,y)\in \tau_{i,j}, (i,j)\in\bZ_m\times\bZ_n$
\begin{eqnarray*}
  |(\Phi-Q_{k}^y\Phi)(x,y)| &\lesssim&
  h^{k}\int_{y_{j-1}}^{y_j}|\partial_xE^x(\partial^k_yu)(x,y)|dy\\
  &\lesssim
  &h^{\alpha-1}\int_{y_{j-1}}^{y_j}\int_{x_{i-1}}^{x_i}|\partial^{\alpha+1-k}_xE^x(\partial^k_yu)(x,y)|dxdy
  \lesssim h^{\alpha}|u|_{\alpha+1,\tau_{i,j}}.
\end{eqnarray*}
  Then by  the Cauchy-Schwartz inequality, we derive
\begin{eqnarray*}
   |\langle\Phi-Q_{k}^y\Phi,
   \partial^2_{x,y}\tilde{v}\rangle_{B_i^x}|&\lesssim& \langle\Phi-Q_{k}^y\Phi,
  \Phi-Q_{k}^y\Phi\rangle_{B_i^x}^{\frac 12}\langle\partial^2_{x,y}\tilde{v} ,
   \partial^2_{x,y}\tilde{v}\rangle_{B_i^x}^{\frac 12}\\
     &\lesssim & h^{\alpha}|u|_{\alpha+1, B_i^x}\|\partial_x\tilde{v}\|_{0,B_i^x}.
\end{eqnarray*}
  Here in the last step, we have used the inverse inequality
\[
 \|\partial^2_{x,y}\tilde{v}\|_{0,B_i^x}\lesssim h^{-1}
  \|\partial_x\tilde{v}\|_{0,B_i^x}.
\]
  Note that $(Q_{k}^y\Phi)
   \partial^2_{x,y}\tilde{v}(x,\cdot)\in \mathbb{P}_{2k-1}$, by Gauss
   quadrature and integrating by part, we obtain
\begin{eqnarray*}
   \langle Q_{k}^y\Phi,
   \partial^2_{x,y}\tilde{v}\rangle_{B_i^x}
  &=&-\sum_{\tau\in
   B_i^x}\sum_{l=1}^kA_{\tau,l}^x\int_{c}^d(\partial_yQ_k^y\Phi)\partial_{x}\tilde{v}(g_{\tau,l}^x,y)dy\\
  &=&-\sum_{\tau\in B_i^x}\sum_{l=1}^kA_{\tau,l}^x\int_{c}^d\partial_xE^x(\partial_yQ_k^y\partial_y^{-1}u)\partial_{x}\tilde{v}(g_{\tau,l}^x,y)dy.
\end{eqnarray*}
  Let $\Upsilon=\partial_yQ_k^y\partial_y^{-1}u$. Since $\tilde v$ is linear with respect to $x$,   we have
\begin{eqnarray*}
\sum_{\tau\in
   B_i^x}\sum_{l=1}^kA_{\tau,l}^x\int_{c}^d(\partial_xQ^x_{\alpha}E^x\Upsilon)\partial_{x}\tilde{v}(g_{\tau,l}^x,y)dy
   =\int_{B_i^x}(\partial_xQ^x_{\alpha}E^x\Upsilon)\partial_{x}\tilde{v}dxdy=0.
\end{eqnarray*}
 Consequently,
\begin{eqnarray*}
  | \langle Q_{k}^y\Phi,
   \partial^2_{x,y}\tilde{v}\rangle_{B_i^x}|&=&\sum_{\tau\in
   B_i^x}\sum_{l=1}^kA_{\tau,l}^x\int_{c}^d\left|\left((\partial_xE^x\Upsilon-\partial_xQ^x_{\alpha}E^x\Upsilon)\partial_{x}\tilde{v}\right)(g_{\tau,l}^x,y)\right|dy\\
   &\lesssim & h^{\alpha}\|\partial^{\alpha+1}_xE^x\Upsilon\|_{0, B_i^x}\|\partial_x\tilde{v}\|_{0,B_i^x}\lesssim h^{\alpha}|u|_{\alpha+1, B_i^x}\|\partial_x\tilde{v}\|_{0,B_i^x}.
\end{eqnarray*}
  Note that
\[
  \partial_x\tilde{v}=\frac{v(x_i,y)-v(x_{i-1},y)}{h}=h^{-1}\int_{x_{i-1}}^{x_i}\partial_x
  v(x,y) dx,
\]
  we have
\[
    \|\partial_x\tilde{v}\|_{0,B_i^x}\lesssim \|v\|_{1,B_i^x}.
\]
   Then
\begin{eqnarray*}
   |J_1| &=& \left|\langle\Phi-Q_{k}^y\Phi,
   \partial^2_{x,y}\tilde{v}\rangle_{B_i^x}+\langle Q_{k}^y\Phi,
   \partial^2_{x,y}\tilde{v}\rangle_{B_i^x}\right|\\
   &\lesssim&
h^{\alpha}|u|_{\alpha+1,B_i^x}\|v\|_{1,B_i^x}.
\end{eqnarray*}

As for $J_2$, recall the  bilinear form $a_h(
 \cdot,\Pi\cdot)$,  and we have
\[
   J_2=-\langle\partial_y^{-1}\partial_x\cal
   L_{B_i^x}(E^xu), \partial^2_{x,y}\tilde{v}\rangle_{B_i^x}-\langle\partial_x^{-1}\partial_y\cal
   L_{B_i^x}(E^xu), \partial^2_{x,y}\tilde{v}\rangle_{B_i^x}
\]
  Note that
\[
   \int_{x_{i-1}}^{x_i}(\partial^2_{x,y}\tilde{v})\partial^{-1}_y\partial_x\cal
   L_{B^x_i}(E^xu) dx=0,
\]
 then
\[
   \langle\partial_y^{-1}\partial_x\cal
   L_{B_i^x}(E^xu), \partial^2_{x,y}\tilde{v}\rangle_{B_i^x}=0.
\]
  Therefore,
\begin{eqnarray*}
    J_2=
   -\langle\partial_x^{-1}\partial_y\cal
   L_{B_i^x}(E^xu),\partial^2_{x,y}\tilde{v}\rangle_{B_i^x}=-\int_{B_i^x}\frac{\partial^2
   \tilde{v}}{\partial x\partial y}\partial_x^{-1}\partial_y\cal L_{B_i^x}(E^xu)
   dxdy.
\end{eqnarray*}
  Since
\[
  \Big(\partial_x^{-1}\partial_y\cal
L_{B_i^x}(E^xu)\Big)(x_i)=\Big(\partial_x^{-1}\partial_y\cal
L_{B_i^x}(E^xu)\Big)(x_{i-1})=0,\ \ \tilde{v}(x,c)=\tilde{v}(x,d)=0,
\]
 integrating by part, we obtain
\begin{eqnarray*}
   J_2&=&-\int_{B_i^x} \tilde{v}\Big(\partial^2_y\cal L_{B_i^x}(E^xu)\Big) dxdy\\
   &=&-\sum_{p,q=2}^kw_{p,q}\int_{B_i^x}\Big(v(x_{i-1},y)\phi_0(s)+v(x_i,y)\phi_1(s)\Big)
   \partial_y^2\Psi_{p,q} dxdy.
\end{eqnarray*}
  Note that $\Psi_{p,q}(\cdot,y)\perp\mathbb{P}_1, p>3$, then only $p=2,3$ in the above equation remain.
  For any $q=2,\ldots,k$, a direct calculation yields
\begin{eqnarray*}
   \left| \int_{B_i^x}\Big(v(x_{i-1},y)\phi_0(s)+v(x_i,y)\phi_1(s)\Big)
   \partial_y^2\Psi_{2,q} dxdy \right|&\lesssim&
   h\int_{c}^d\left|v(x_{i-1},y)+v(x_{i},y)\right| dy\\
   &\lesssim& \int_{B_i^x}\left|v(x,y)\right| dy.
\end{eqnarray*}
  Here in the last step, we have used the inverse inequality
\[
   |v(\xi_i,y)|\lesssim h^{-1}\int_{x_{i-1}}^{x_i}|v(x,y)| dx,\ \ \forall
   \xi_i\in[x_{i-1},x_i], v\in U_h.
\]
  By the same argument, we derive
\begin{eqnarray*}
   \left| \int_{B_i^x}\Big(v(x_{i-1},y)\phi_0(s)+v(x_i,y)\phi_1(s)\Big)
   \partial_y^2\Psi_{3,q} dxdy\right|&\lesssim&
   h\int_{c}^d\left|v(x_{i-1},y)-v(x_{i},y)\right| dy\\
   &\lesssim& h\int_{B_i^x}\left|\partial_xv(x,y)\right| dy.
\end{eqnarray*}
    Substituting the above two inequalities into the formula of $J_2$, we
   have
\begin{eqnarray*}
   |J_2| &\lesssim & (\|X_2\|_{\infty}+h\|X_3\|_{\infty})\|v\|_{1,1,B_i^x}\\
   &\lesssim & h^{\alpha+\frac 12}|{\rm{ln}}h|^{\frac 12}\|u\|_{\alpha+1,B_i^x}\|v\|_{1,B_i^x}.
\end{eqnarray*}
  Then the desired result follows by combining $J_1$ with $J_2$.
\end{proof}

  Similarly, by denoting the residual for all $j\in\bZ_n$
\begin{eqnarray*}
   (R_j^y(w),v)=-\langle\partial^{-1}_y\partial_x w, \partial^2_{x,y}v\rangle_{B_j^y}-a_{h}(\cal L_{B_j^y}(w), \Pi v),\ \
 w\in H^1_0(\Omega), v\in U_h,
\end{eqnarray*}
 we have
\begin{equation}\label{ly-operator}
   |(R^y_j(E^yu),v)|\lesssim
   h^{\alpha}\|u\|_{\alpha+1,B_j^y}\|v\|_{1,B_j^y},
\end{equation}
 and
\begin{equation}\label{ly-operator1}
   |(R^y_j(E^yE^xu),v)|\lesssim
   h^{\alpha}\|u\|_{\alpha+1,B_j^y}\|v\|_{1,B_j^y}.
\end{equation}

\medskip

  With all the  above preparations, we are ready to prove  Proposition \ref{theorem_ver}.


{\it{Proof of Proposition \ref{theorem_ver}}.} As a direct
consequence of \eqref{eq66}, we have
\begin{eqnarray*}
   \left\|\cal L^x(E^xu)\right\|_{\infty}\lesssim
  h^{k+2}|{\rm{ln}}h|^{\frac 12}\|u\|_{\alpha+1}.
\end{eqnarray*}
 Similar results hold true for $\cal
  L^y(E^yu), \tilde{L}^x(E^xu), \tilde{L}^y(E^yu)$ and $\cal
  L^y(E^yE^xu),\tilde{L}^y(E^yE^xu)$ by the same arguments.
  Then \eqref{ineq_1} follows.

  Now we turn to prove \eqref{correction_2}. Let $R=u-u_I$.
   By the orthogonal property, we have for all $v\in
   U_{h}$
\begin{eqnarray*}
   a_{h}(u_h-u_I,\Pi v)&=&a_{h}(u-u_I,\Pi v)\\
  & =&-\langle\partial_y^{-1}\partial_xR,  \partial_{x,y}^2v\rangle-\langle\partial_x^{-1}\partial_yR, \partial_{x,y}^2 v\rangle=I_1+I_2.
\end{eqnarray*}
  From the decomposition \eqref{decomposion}, we have
\[
  I_1=-\langle\partial_y^{-1}\partial_x E^xu, \partial^2_{x,y} v)
  -\langle\partial_y^{-1}\partial_xE^yu, \partial^2_{x,y} v\rangle+\langle\partial_y^{-1}\partial_x(E^yE^xu), \partial^2_{x,y}
  v\rangle.
\]
   Let $w_h=w_1+w_2$ with
\[
   w_1=\cal L^{x}(E^xu)+\cal L^{y}(E^yu)-\cal L^{y}(E^yE^xu),
\]
   and
\[
    w_2=\tilde{\cal L}^{x}(E^xu)+\tilde{\cal L}^{y}(E^yu)-\tilde{\cal
    L}^{y}(E^yE^xu).
\]
  By \eqref{eq7}-\eqref{ly-operator1}, we derive
\begin{eqnarray*}
    |I_1-a_{h}(w_1,\Pi v)|&=&\sum_{B_i^x}\left|(R^x_i(E^xu),v)\right|+\sum_{B_j^y}\left|(R^y_j(E^yu),v)\right|+\left|(R^y_j(E^yE^xu),v)\right|\\
       &\lesssim &h^{\alpha}\|u\|_{\alpha+1}\|v\|_{1}.
\end{eqnarray*}
   By the same arguments, we have
\[
   |I_2-a_{h}(w_2,\Pi v)|\lesssim h^{\alpha}\|u\|_{\alpha+1}\|v\|_{1}.
\]
  Note that
\[
   a_h(u-u_I-w_h,\Pi v)=I_1-a_{h}(w_1,\Pi v)+I_2-a_{h}(w_2,\Pi v),
\]
then \eqref{correction_2} follows.   $\Box$

\section{Superconvergence}
  In this section, we shall study  superconvergence properties of
  $u_h$ at three kinds of special points : nodes, Gauss and Lobatto points.

 Our first goal is to prove the {\it $2k$-conjecture}.

%
%
\begin{theorem}\label{theorem_sup}
   Let $u\in H^{2k+1}(\Omega)$ be the solution of \eqref{Poisson}, and $u_{h}$ the solution of \eqref{FVM}.
   Then,
   \begin{equation}\label{sup-node}
   |(u-u_{h})(P)|\lesssim h^{2k}|{\rm{ln}}h|^{\frac 12}\|u\|_{2k+1}, \forall P\in \N_h.
\end{equation}
\end{theorem}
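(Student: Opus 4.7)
\textbf{Proof plan for Theorem \ref{theorem_sup}.}
At a node $P \in \N_h$, the tensor-product Lobatto interpolant $u_I = Q^x_k Q^y_k u$ satisfies $(u - u_I)(P) = 0$, since the vertices of $\T_h$ are among the Lobatto points, and the correction function delivered by Proposition \ref{theorem_ver} satisfies $w_h(P) = 0$ as well. Introducing the discrete error
\[
  \theta_h := u_h - u_I - w_h \in U_h,
\]
we obtain immediately $(u - u_h)(P) = -\theta_h(P)$, so the whole task reduces to bounding $|\theta_h(P)|$ by $h^{2k}|\ln h|^{1/2}\|u\|_{2k+1}$.

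The natural device is a discrete Green's function argument. Using the inf-sup condition for $a_h(\cdot,\Pi\cdot)$ on $U_h\times U_h$ established in \cite{zhang;zou2012}, I would define $G^h_P\in U_h$ by the linear functional $v\mapsto v(P)$, namely
\[
  a_h(v,\Pi G^h_P) = v(P),\qquad \forall v\in U_h.
\]
Since $\Pi G^h_P\in V_h$ and both $u$ and $u_h$ satisfy the local conservation identity against any test function in $V_h$, we have the Galerkin-type orthogonality $a_h(u - u_h,\Pi G^h_P) = 0$. Plugging $v=\theta_h$ into the definition of $G^h_P$ and inserting $u_h = u$ (modulo this orthogonality) therefore gives
\[
  \theta_h(P) \;=\; a_h(\theta_h,\Pi G^h_P) \;=\; a_h(u - u_I - w_h,\,\Pi G^h_P).
\]
Invoking Proposition \ref{theorem_ver} with $\alpha = 2k$ and $v = G^h_P$ yields
\[
  |\theta_h(P)| \;\lesssim\; h^{2k}\|u\|_{2k+1}\,\|G^h_P\|_1,
\]
so the proof is complete once we establish the logarithmic bound $\|G^h_P\|_1 \lesssim |\ln h|^{1/2}$.

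This $H^1$ estimate of the discrete Green's function is the main obstacle, since in two dimensions the continuous Green's function is not in $H^1$ and the log factor must come from a careful regularization. I would argue in the spirit of the FEM case: choose a regularized delta $\delta^h_P$ supported in an $O(h)$-patch around $P$ (for instance a suitable linear combination of the characteristic functions $\psi_{K^*_Q}$ for control volumes $K^*_Q$ adjacent to $P$) so that $(\delta^h_P,\Pi v) = v(P)$ for every $v\in U_h$; then the solution $\tilde G_P$ of the auxiliary problem $-\Delta \tilde G_P = \delta^h_P$ obeys classical singular-integral/weighted estimates yielding $\|\tilde G_P\|_1 \lesssim |\ln h|^{1/2}$ and $\|\tilde G_P\|_2 \lesssim h^{-1}$. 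The energy-norm convergence of the FV scheme in Lemma~2.1 (i.e.\ \eqref{super_close}), combined with the continuity and coercivity of $a_h(\cdot,\Pi\cdot)$ proved in \cite{zhang;zou2012}, then transfers the logarithmic bound from $\tilde G_P$ to $G^h_P$ with a constant independent of $h$. Substituting $\|G^h_P\|_1 \lesssim |\ln h|^{1/2}$ into the previous display gives \eqref{sup-node}.
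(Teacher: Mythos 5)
Your overall skeleton is exactly the paper's: write $(u-u_h)(P)=-\theta_h(P)$ with $\theta_h=u_h-u_I-w_h$ (using $u_I(P)=u(P)$ and $w_h(P)=0$), define a discrete Green's function $g_h\in U_h$ by $a_h(v,\Pi g_h)=v(P)$ via Lax--Milgram, use the conservation identity to pass from $u_h$ to $u$ in the first slot, and then apply Proposition \ref{theorem_ver} with $\alpha=2k$ to get $|\theta_h(P)|\lesssim h^{2k}\|u\|_{2k+1}\|g_h\|_1$. Where you diverge is the one remaining ingredient, $\|g_h\|_1\lesssim|\ln h|^{1/2}$. You propose the classical FEM machinery: a regularized delta $\delta^h_P$, the auxiliary continuous problem $-\Delta\tilde G_P=\delta^h_P$ with weighted a priori bounds, and a transfer of the logarithmic estimate from $\tilde G_P$ to $g_h$. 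That route can be made to work, but it is considerably heavier than what is needed, and the transfer step is genuinely delicate here because the form $a_h(\cdot,\Pi\cdot)$ is nonsymmetric and $g_h$ sits in the \emph{second} slot, so comparing $g_h$ with (an interpolant of) $\tilde G_P$ requires an adjoint-type error analysis that you have not spelled out. The paper bypasses all of this with a two-line trick: test the defining equation with $v=g_h$ itself, so coercivity gives $\|g_h\|_1^2\lesssim|a_h(g_h,\Pi g_h)|=|g_h(P)|\le\|g_h\|_\infty$, and then the discrete Sobolev inequality $\|v\|_\infty\lesssim|\ln h|^{1/2}\|v\|_1$ for $v\in U_h$ (quoted from Zhu--Lin) immediately yields $\|g_h\|_1\lesssim|\ln h|^{1/2}$. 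You would do well to replace your regularized-Green's-function detour with this self-testing argument; as written, that part of your plan is the only place where a real (if fillable) gap remains.
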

\begin{proof}
 By \cite{zhang;zou2012}, there hold
\begin{equation}\label{coer}
  a_{h}(w,\Pi v)\lesssim \|w\|_1 \|v\|_1,\
  a_{h}(v,\Pi v)\gtrsim \|v\|_1^2,\ \forall w,v\in U_{h}.
\end{equation}
 For any $v\in U_{h}$ and $Q\in\Omega$, by  the Lax-Milgram Lemma, there exists $g_{h}\in U_{h}$ such that
\begin{equation}\label{equation2}
   a_{h}(v, \Pi g_{h})=v(Q).
\end{equation}
Choosing $v=g_{h}$, we have, from \eqref{coer} and \eqref{equation2}
\[
   \|g_{h}\|_1^2\le|a_{h}(g_{h}, \Pi g_{h})|=|g_{h}(Q)|\le \|g_{h}\|_{\infty}.
\]
  Since (cf.,\cite{Zhu.QD;LinQ1989}, p.84, Theorem 2.8)
\[
   \|v\|_{\infty}\lesssim |{\rm{ln}}h|^{\frac 12}\|v\|_{1},\ \ \forall v\in
   U_{h},
\]
we have
\begin{equation}\label{greenfunction_v}
   \|g_{h}\|_1\lesssim |{\rm{ln}}h|^{\frac 12}.
\end{equation}
  Letting $v=u_h-u_I-w_{h}\in
  U_{h}$ in \eqref{equation2} and using \eqref{correction_2} and
  \eqref{greenfunction_v}, we obtain
\begin{equation}\label{esti1}
   |(u_h-u_I-w_{h})(Q)|
   =|a_{h}(u-u_I-w_{h}, \Pi g_{h
   })|\lesssim h^{2k}|{\rm{ln}}h|^{\frac 12}\|u\|_{2k+1}.
\end{equation}
   Noticing $w_h=0$ and $u_I=u$ at all nodes $P\in \N_h$,
 the desired result \eqref{sup-node} follows.
\end{proof}

We next discuss superconvergence of $u_h$ at Gauss and Lobatto points.

\begin{theorem}\label{core_0}
    Let $u\in H^{k+3}(\Omega)$ be the solution of \eqref{Poisson}, and $u_{h}$ the solution of \eqref{FVM}.
   Then,
\begin{equation}\label{sup-lobatto}
   |(u-u_{h})(P)|\lesssim h^{k+2}|{\rm{ln}}h|^{\frac
   {1}{2}}\|u\|_{k+3},\ \ \forall
   P\in\N^l,
\end{equation}
   and
\begin{equation}\label{sup-gauss}
   |\nabla(u-u_{h})(Q)|\lesssim
   h^{k+1}|{\rm{ln}}h|^{\frac
   {1}{2}}\|u\|_{k+3},\ \ \forall Q\in\cal\N^g.
\end{equation}
\end{theorem}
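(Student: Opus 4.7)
The plan is to prove both \eqref{sup-lobatto} and \eqref{sup-gauss} using the correction function $w_h$ furnished by Proposition \ref{theorem_ver} with $\alpha=k+2$, combined with pointwise error bounds for $u-u_I$ at Lobatto and Gauss points. The underlying decomposition I would exploit is
\[
u-u_h=(u-u_I)-w_h-(u_h-u_I-w_h),
\]
which reduces estimating the FV error at a special point to three contributions: the interpolation error, the correction, and the supercloseness quantity $u_h-u_I-w_h$.

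For \eqref{sup-lobatto} I would first mimic the discrete Green-function argument already carried out in the proof of Theorem \ref{theorem_sup}: invoke Lax--Milgram together with \eqref{coer} to produce $g_h\in U_h$ satisfying $a_h(v,\Pi g_h)=v(P)$ for every $v\in U_h$ and $\|g_h\|_1\lesssim|\ln h|^{1/2}$, and then apply \eqref{correction_2} with $\alpha=k+2$ to conclude $|(u_h-u_I-w_h)(P)|\lesssim h^{k+2}|\ln h|^{1/2}\|u\|_{k+3}$. The $L^\infty$ estimate \eqref{ineq_1} handles $|w_h(P)|$ at the same order. The remaining piece $(u-u_I)(P)$ at a Lobatto point $P$ I would attack via the hidden orthogonality $\phi_{k+1}(L_p)=0$ at any interior Lobatto point $L_p$: by Bonnet's recurrence one gets $P_{k+1}(L_p)=L_p P_k(L_p)=P_{k-1}(L_p)$, so $\phi_{k+1}(L_p)=\int_{-1}^{L_p}P_k(t)\,dt=0$. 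Combined with the decomposition \eqref{decomposion}, the orthogonality \eqref{orth-prop}, and a Bramble--Hilbert argument applied to the functional $v\mapsto(v-Q_k^xv)(L_p,\cdot)$, which now annihilates $\mathbb{P}_{k+1}$ in $x$, this yields $|(u-u_I)(P)|\lesssim h^{k+2}\|u\|_{k+2,\infty}\lesssim h^{k+2}|\ln h|^{1/2}\|u\|_{k+3}$.

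For \eqref{sup-gauss} the duality route alone is not quite sharp for gradient point values, so I would first extract an $H^1$-supercloseness bound. Combining Galerkin orthogonality $a_h(u-u_h,\Pi v)=0$, the coercivity in \eqref{coer}, and \eqref{correction_2} with $\alpha=k+2$, one obtains
\[
\|u_h-u_I-w_h\|_1^{2}\lesssim|a_h(u-u_I-w_h,\Pi(u_h-u_I-w_h))|\lesssim h^{k+2}\|u\|_{k+3}\|u_h-u_I-w_h\|_1,
\]
whence $\|u_h-u_I-w_h\|_1\lesssim h^{k+2}\|u\|_{k+3}$. An inverse estimate on the quasi-uniform mesh promotes this to $\|\nabla(u_h-u_I-w_h)\|_\infty\lesssim h^{k+1}\|u\|_{k+3}$, and the same device applied to $w_h$ yields $\|\nabla w_h\|_\infty\lesssim h^{-1}\|w_h\|_\infty\lesssim h^{k+1}|\ln h|^{1/2}\|u\|_{k+3}$ via \eqref{ineq_1}. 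For the interpolation piece at a Gauss point $Q$, the vanishing $\phi_{k+1}'(Q)=P_k(Q)=0$ in the relevant variable, together with \eqref{orth-prop} and the Lobatto expansion of $u-Q_k^xu$, gives $|\nabla(u-u_I)(Q)|\lesssim h^{k+1}\|u\|_{k+2,\infty}$; assembling the three pieces through the decomposition above then produces \eqref{sup-gauss}.

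The main obstacle I anticipate is not the Lax--Milgram/coercivity machinery, which is essentially a repetition of the technique of Theorem \ref{theorem_sup} together with a standard bootstrap, but rather the pointwise interpolation estimates $|(u-u_I)(P)|\lesssim h^{k+2}$ at Lobatto points and $|\nabla(u-u_I)(Q)|\lesssim h^{k+1}$ at Gauss points. These must be executed with care because $u_I=Q_k^xQ_k^yu$ is the Lobatto-expansion truncation rather than the Lagrange interpolant at Lobatto points, so the mixed cross term $E^yE^xu$ in \eqref{decomposion} has to be separately shown not to spoil the rate, and one has to track how the scalings of the Lobatto coefficients $b_r(y)=O(h^r)$ interact with the special values $\phi_{k+1}(L_p)=0$ and $P_k(G)=0$ at the two families of distinguished points.
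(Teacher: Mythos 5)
Your proposal is correct and follows essentially the same route as the paper: decompose $u-u_h$ through $u_I$ and the correction function $w_h$, control $(u_h-u_I-w_h)$ pointwise by the discrete Green-function argument of Theorem \ref{theorem_sup} together with \eqref{ineq_1}--\eqref{correction_2} (with $\alpha=k+2$), and add the classical interpolation superconvergence of $u_I$ at Lobatto and Gauss points. The only differences are minor: the paper simply cites \cite{Chen.C.M2001,Zhu.QD;LinQ1989} for the estimates $|(u-u_I)(P)|\lesssim h^{k+2}|u|_{k+2,\infty}$ and $|\nabla(u-u_I)(Q)|\lesssim h^{k+1}|u|_{k+2,\infty}$ that you reprove via $\phi_{k+1}(L_p)=0$ and $P_k(G)=0$, and for \eqref{sup-gauss} it gets the discrete part in $W^{1,\infty}$ by applying the inverse inequality directly to $\|u_I-u_h\|_{\infty}\lesssim h^{k+2}|{\rm{ln}}h|^{\frac 12}\|u\|_{k+3}$ rather than through your separate $H^1$ energy (coercivity) bound, both of which yield the same $h^{k+1}$ rate.
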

\begin{proof}
 By \eqref{ineq_1}-\eqref{correction_2} and \eqref{equation2}, we have
\begin{eqnarray*}
  \|u_I-u_h\|_{\infty}\lesssim h^{k+2}|{\rm{ln}}h|^{\frac
12}\|u\|_{k+3}.
\end{eqnarray*}
  By the inverse inequality,
\[
   |u_I-u_h|_{1,\infty}\lesssim h^{-1}\|u_I-u_h\|_{\infty}\lesssim h^{k+1}|{\rm{ln}}h|^{\frac
12}\|u\|_{k+3}.
\]
 On the other hand, by the definition of $u_I$, we have (see, e.g.,\cite{Chen.C.M2001,Zhu.QD;LinQ1989})
\[
   |(u-u_{I})(P)|\lesssim
   h^{k+2}|u|_{k+2,\infty},\ \  \forall
   P\in\N^l,
\]
  and
\[
   |\nabla(u-u_{I})(Q)|\lesssim
   h^{k+1}|u|_{k+2,\infty},\ \ \forall Q\in \N^g.
\]
  The desired statements  \eqref{sup-lobatto}-\eqref{sup-gauss} then follows.
\end{proof}

\begin{remark}
   As a direct consequence of the above theorem, we have
\[
    |u_h-u_I|_{1}\lesssim |u_h-u_I|_{1,\infty}\lesssim h^{k+1}|{\rm{ln}}h|^{\frac
  12}\|u\|_{k+3},
\]
  and
\[
   \|u_I-u_h\|_{0}\lesssim \|u_I-u_h\|_{\infty} \lesssim h^{k+2}|{\rm{ln}}h|^{\frac
12}\|u\|_{k+3}.
\]
   It was pointed out in \cite{zhang;zou2012} that the FV approximation $u_h$ is
  super-close to the Lobatto interpolation function $\tilde{u}_I$.
 The above inequalities clearly indicate the same for the interpolation function $u_I$, i.e.,
  $u_h$ is also super-close to $u_I$ up to a logarithmic factor.
\end{remark}

  \section{Numerical results}

  In this section, we present numerical examples to support our theoretical findings in the previous section.

  We consider \eqref{Poisson} with $\Omega=[0,1]\times[0,1]$ and the
  right-hand side
\begin{eqnarray*}
   f(x,y) &=& [ (5\pi^2-4y^2-3)\sin(\pi x)\sin(2\pi y) -8\pi y\sin(\pi x)\cos(2\pi y) \\
   &&  -2\pi \cos(\pi x)\sin(2\pi y) ] e^{x-0.5+y^2}.
\end{eqnarray*}
   The exact solution is then
\[
    u(x,y) = \sin(\pi x) \sin(2\pi y) e^{x-0.5+y^2}, \; (x,y)\in\Omega.
\]

   We construct ${\cal T}_h$ with $h = 2^{-s}$, $s=1,2,\ldots,8$, by dividing $\Omega$ into
  $h^{-1}\times h^{-1}$ squares, and solve \eqref{Poisson} by the FV scheme \eqref{FVM} with $k=3,4$.
  For each $h$ and $k$, we measure maximum errors at nodes, Lobatto points, and Gauss points (for gradient only), respectively.
  They are defined by
\[
 e_{N}=\max_{P\in\N_h}|(u-u_h)(P)|, \;\;  e_{L}=\max_{P\in\N^ l}|(u-u_h)(P)|, \;\;
e_{G}=\max_{Q\in\N^ g}|\nabla(u-u_h)(Q)|.
\]

 Numerical data are demonstrated in Table \ref{case=0}, and corresponding error curves are depicted in Figure \ref{1_2} with log-log scale.
  We observe a convergence slope $k+1$ for $e_G$, $k+2$ for $e_L$, and $2k$ for $e_N$, respectively. These results
   confirm our theoretical findings in Theorem \ref{core_0} and Theorem \ref{theorem_sup}: The derivative error
  is superconvergent at all Gauss points and the function value error is superconvergent at all Lobatto points. Moreover,
 the approximation error at nodes converges with a rate $h^{2k}$, the {\it $2k$-conjecture} for our finite volume approximation is verified.

 \begin{table}
\caption{} \centering
\begin{threeparttable}
        \begin{tabular}{|c|c|c|c||c|c|c|}
        \hline
        &\multicolumn{3}{|c||}{$k=3$}&\multicolumn{3}{|c|}{$k=4$}\\
        \cline{1-7}N & $e_{G}$ &  $e_{L}$ & $e_{N}$ & $e_{G}$ &  $e_{L}$ & $e_{N}$ \\
        \hline
2 &  2.699e-1 &  8.851e-3 &  1.327e-3 &  4.326e-2 &  2.044e-4 & 1.190e-5\\
4 &  2.897e-2 &  2.902e-4 &  2.761e-5 & 1.536e-3 &  5.354e-6 &  8.178e-8\\
8 &  2.224e-3 &  8.863e-6 &  5.743e-7 & 4.979e-5 &  1.092e-7 &  3.750e-10\\
16 & 1.660e-4 &  2.701e-7 &  1.056e-8 & 1.586e-6 &  2.397e-9  & 1.510e-12\\
32 & 1.117e-5 &  8.288e-9 &  1.919e-10 & 4.986e-8&  4.340e-11 &  ---\\
64 &  7.222e-7 & 2.567e-10 &  3.309e-12& 1.564e-9 & 7.257e-13  & ---\\
\hline
 \end{tabular}
 \end{threeparttable}
 \label{case=0}
\end{table}

\begin{figure}[htbp]
\scalebox{0.5}{\includegraphics{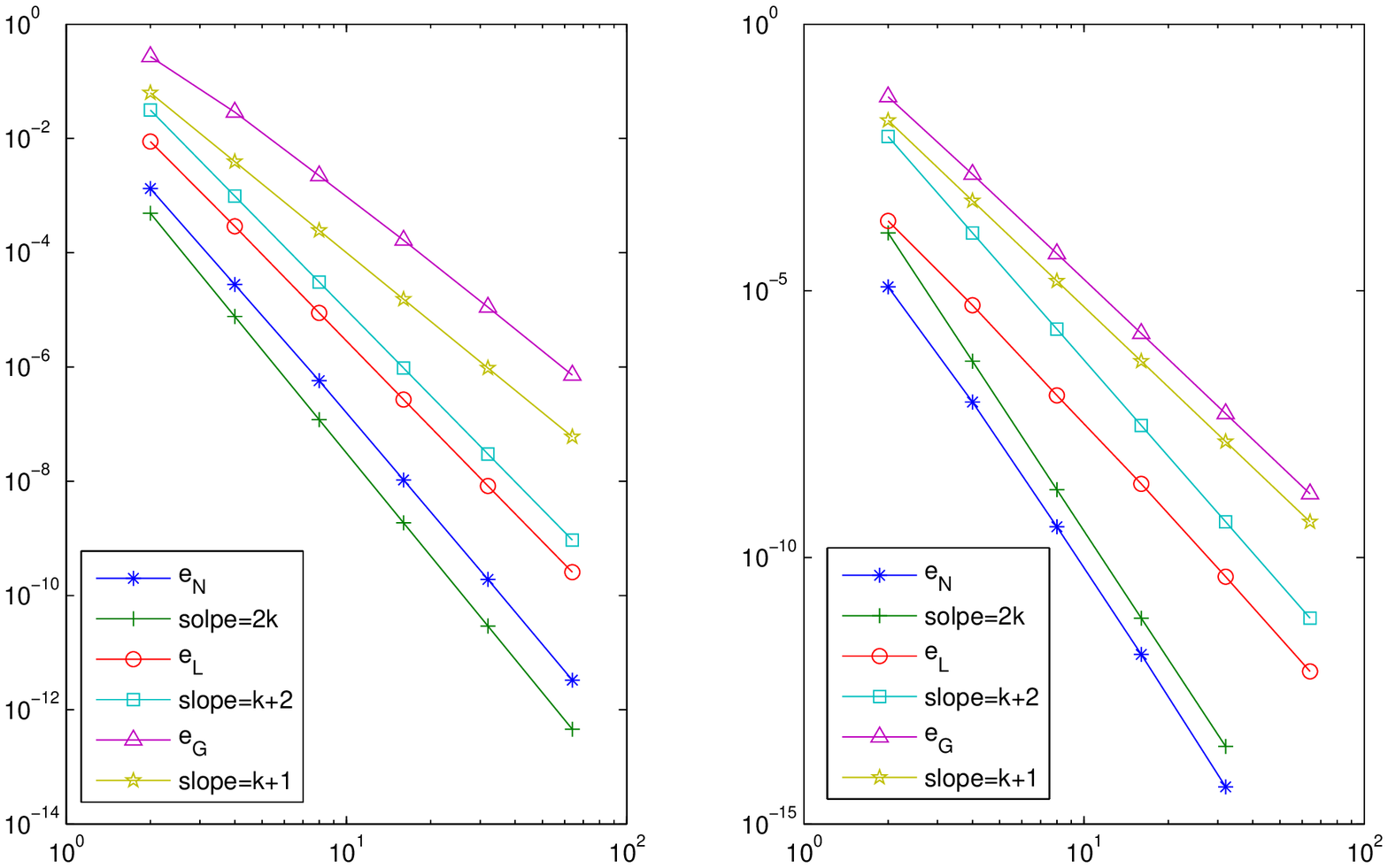}}
 \caption{left: $k=3$, right: $k=4$.}\label{1_2}
\end{figure}


\bigskip
\bigskip
\end{document}